\documentclass[a4paper,UKenglish,cleveref, autoref, thm-restate, final]{lipics-v2021}

\pdfoutput=1 
\hideLIPIcs  


\bibliographystyle{plainurl}

\title{On the Skolem Problem for Reversible Sequences}

\titlerunning{On the Skolem Problem for Reversible Sequences}

\author{George Kenison}{Institute of Logic and Computation, TU Wien, Vienna, Austria}{george.kenison@tuwien.ac.at}{}{}


\authorrunning{G. Kenison} 

\Copyright{George Kenison} 

\ccsdesc[500]{Mathematics of computing~Discrete mathematics}
\ccsdesc[500]{Computing methodologies~Algebraic algorithms} 

\keywords{The Skolem Problem, Linear Recurrences, Verification} 






\nolinenumbers 


\usepackage[usenames,dvipsnames,table]{xcolor}

\usepackage{amsmath, amssymb, mathrsfs} 
\usepackage{mathtools}
\usepackage{url}
\usepackage{mleftright}
\usepackage{microtype}
\usepackage[small,basic]{complexity}
\usepackage{tikz}
\usetikzlibrary{arrows, positioning}
\usepackage{aliascnt}


\newcommand{\vect}[1]{\underline{#1}}

\renewcommand{\epsilon}{\ensuremath\varepsilon}

\newclass{\PTIME}{PTIME}

\newcommand{\obeta}{\smash{\overline{\beta}}\vphantom{\beta}}
\newcommand{\oalpha}{\smash{\overline{\alpha}}\vphantom{\alpha}}
\newcommand{\ogamma}{\smash{\overline{\gamma}}\vphantom{\gamma}}

\renewcommand{\R} {\mathbb{R}} 
\newcommand{\Z} {\mathbb{Z}} 
\renewcommand{\C} {\mathbb{C}} 
\newcommand{\N} {\mathbb{N}}

\newcommand{\Q} {\mathbb{Q}}


\newcommand{\seq}[3][{}]{\langle #2 \rangle_{#3}^{#1}}



\renewcommand{\mathcal}[1]{\mathscr{#1}}


\DeclareMathAlphabet{\mathbbb}{U}{bbold}{m}{n}
\providecommand*{\eu}%
{\ensuremath{\mathrm{e}}}
\providecommand*{\iu}%
{\ensuremath{\mathrm{i}}}

\usepackage{xparse}
\usepackage{enumitem}
\usepackage{etoolbox}
\usepackage{letltxmacro}

\makeatletter
\AtBeginDocument{%
\def\strip@@parentheses(#1){#1}
\LetLtxMacro\enumerate@@item\item
\AtBeginEnvironment{enumerate}{%
  \RenewDocumentCommand{\item}{o}{%
    \IfValueTF{#1}{
      \enumerate@@item[#1]%
      \protected@edef\@currentlabel{\strip@@parentheses#1}
    }{%
      \enumerate@@item
    }%
  }%
}
}
\makeatother

\EventEditors{Stefan Szeider, Robert Ganian, and Alexandra Silva}
\EventNoEds{3}
\EventLongTitle{47th International Symposium on Mathematical Foundations of Computer Science (MFCS 2022)}
\EventShortTitle{MFCS 2022}
\EventAcronym{MFCS}
\EventYear{2022}
\EventDate{August 22--26, 2022}
\EventLocation{Vienna, Austria}
\EventLogo{}
\SeriesVolume{241}
\ArticleNo{7}

\begin{document}

\maketitle

\begin{abstract}
Given an integer linear recurrence sequence \(\seq[\infty]{X_n}{n=0}\), the Skolem Problem asks to determine whether there is an \(n\in\N_0\) such that \(X_n = 0\).
Recent work by Lipton, Luca, Nieuwveld, Ouaknine, Purser, and Worrell proved that the Skolem Problem is decidable for a class of {reversible} sequences of order at most seven.
Here we give an alternative proof of their result.
Our novel approach employs a powerful result for Galois conjugates that lie on two concentric circles due to Dubickas and Smyth.
\end{abstract}

\section{Introduction}

\paragraph*{The Skolem Problem}

An integer-valued \emph{linear recurrence sequence} \(\seq[\infty]{X_n}{n=0}\) satisfies a relation of the form
\begin{equation} \label{eq:rec}
	X_{n+d} = a_{d-1} X_{n+d-1} + \cdots + a_1 X_{n+1} + a_0 X_n
\end{equation}
for each \(n\in\N_0\).
Without loss of generality, we shall assume that each of the coefficients \(a_0, a_1,\ldots, a_{d-1}\in\Z\) and additionally that \(a_0\neq 0\).
We call \(d\) the \emph{length} of the recurrence relation and the \emph{order} of \(\seq{X_n}{n}\) is the length of the shortest relation satisfied by \(\seq{X_n}{n}\).
The polynomial \(f(x) = x^d - a_{d-1}x^{d-1} - \cdots - a_1 x - a_0\) is the \emph{characteristic polynomial} associated with relation \eqref{eq:rec}.
Given such a sequence, the \emph{Skolem Problem} \cite{everest2003recurrence,halava2005skolem} asks to determine whether there exists an \(n\in\N\) such \(X_n=0\).
The Skolem Problem is well-motivated with connections to research topics such as program verification \cite{ouaknine2015linear}.
Take, for example, the following linear loop \(P\) with inputs \(\vect{w},\vect{b} \in\Z^d\) and \(A\in\Z^{d\times d}\) where
\begin{equation} \label{eq:loop}
	P \colon \vect{v} \leftarrow \vect{w};\enspace \textbf{while}\enspace \vect{b}^\top \vect{v} \neq 0\enspace \textbf{do}\enspace \vect{v} \leftarrow A \vect{v}.
\end{equation}
Let \(\seq{X_n}{n}\) be the linear recurrence sequence with terms given by \(X_n = \vect{b}^\top A^n \vect{w}\).
It is clear that loop \(P\) terminates if and only if there exists an \(n\in\N_0\) such that \(X_n=0\).

\paragraph*{Motivation}
A recent resurgence of interest in the Skolem Problem (and related problems) has lead to the publication of a number of papers that consider restricted variants.
The resulting specialised decision procedures generally fall into two categories: those that consider an infinite subset of the natural numbers \cite{kenison2020skolem, luca2021universal} or those that restrict the class of linear recurrence sequences.
Our motivation is the latter type of specialisation and, in particular, a recent paper by Lipton et al.\ \cite{lipton2021skolem} that establishes the following theorem.
\newpage
\begin{theorem} \label{thm:main}
The Skolem Problem is decidable for the class of reversible integer linear recurrences of order at most seven.
\end{theorem}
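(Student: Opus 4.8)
The plan is to analyse the asymptotic behaviour of \(\seq{X_n}{n}\) through its characteristic roots \(\alpha_1,\dots,\alpha_d\), exploiting the fact that reversibility (\(a_0=\pm1\)) forces the product of the roots to be \(\pm1\), so that every \(\alpha_i\) is an algebraic \emph{unit}. After the standard reduction to a non-degenerate sequence -- passing to finitely many arithmetic subsequences so that no ratio of distinct roots is a root of unity -- I would first dispose of the boundary case for the dominant modulus \(\rho := \max_i \lvert\alpha_i\rvert\). If \(\rho = 1\), then every root is a unit of modulus one and hence, by Kronecker's theorem, a root of unity, so \(\seq{X_n}{n}\) is eventually periodic and the Skolem Problem is trivially decidable. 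Thus I may assume \(\rho > 1\).

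Next I would stratify by the number and nature of the dominant roots. If there is a unique (necessarily real) dominant root, then a single term \(c\rho^n\) dominates and \(\sign(X_n)\) stabilises, leaving only finitely many zeros, which are located effectively. If the dominant roots form a single complex-conjugate pair \(\alpha,\overline{\alpha}\), then \(X_n = \rho^n\bigl(c\cos(n\theta + \varphi) + \littleo{1}\bigr)\) with \(\theta = \arg\alpha\); when \(\theta/\pi \in \Q\) the sign pattern is periodic, and otherwise the residues \(n\theta \bmod 2\pi\) equidistribute, so that together with an effective lower bound on \(\lvert X_n\rvert\) away from the zeros of the dominant oscillation (via Baker's theorem on linear forms in logarithms) the search for zeros becomes finite.

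The genuine obstacle is the case of two or more dominant complex-conjugate pairs, which is precisely the source of hardness in the general Skolem Problem. Here I would use reversibility together with the order bound in tandem. Since each root is a unit, the minimal polynomial \(g\) of a dominant root \(\beta\) cannot have all of its conjugates on the circle \(\lvert z\rvert = \rho\): otherwise the product of its conjugates would have modulus \(\rho^{\deg g} > 1\), contradicting that \(\beta\) is a unit. Hence \(g\) must also have conjugates strictly inside the disc of radius \(\rho\). With \(\deg g \le d \le 7\) and at least four conjugates already placed on the outer circle, the degree constraint leaves only a short list of configurations, and in each of them the conjugates of the dominant roots are confined to two concentric circles centred at the origin. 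At this point I invoke the Dubickas--Smyth classification of algebraic numbers whose conjugates lie on two concentric circles: it forces a rigid multiplicative relation among the dominant roots, from which one deduces that their arguments are pairwise commensurable (rational multiples of a common angle), so that the dominant exponential sum is governed by a \emph{single} real frequency. This collapses the several-frequency analysis into the tractable single-frequency situation treated in the previous paragraph.

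Once the dominant behaviour has been reduced to a single rational-angle or single-frequency oscillation, the finitely many candidate indices at which the leading contribution can vanish are isolated, and on the complementary set an effective lower bound of the shape \(\lvert X_n\rvert \ge \rho^n\, n^{-C}\) -- again supplied by Baker's theorem applied to the subdominant terms -- shows that \(X_n \neq 0\) for all \(n\) beyond an explicit threshold; a finite search then decides the problem. The step I expect to be most delicate is establishing that every admissible dominant-root configuration at order at most seven genuinely yields a two-circle conjugate set to which the Dubickas--Smyth result applies: the bound \(7\) is exactly what keeps this enumeration finite and each resulting case resolvable.
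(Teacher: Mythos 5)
Your overall skeleton matches the paper's strategy (reversibility makes every characteristic root an algebraic unit; Kronecker's theorem disposes of dominant modulus \(1\); Dubickas--Smyth handles many dominant roots; few dominant roots are decidable), but the execution of the crucial step---the case of at least four dominant roots---has a genuine gap. Your claim that at order at most seven such a configuration confines all conjugates to \emph{two} concentric circles is unjustified, and at order seven it is simply false: there one gets four dominant roots, one real non-dominant root \(\delta\), and a non-dominant complex pair \(\gamma,\ogamma\), and in general \(|\gamma|\neq|\delta|\), so the roots lie on \emph{three} circles and \autoref{thm:bicycle} does not apply. The paper must close exactly this case by a separate argument: transitivity of the Galois group on the roots of the (first proven irreducible) polynomial, applied to the preserved identity \(\alpha\oalpha=\beta\obeta\), yields a modulus contradiction (\autoref{prop:unit7}). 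Even at order six, the fact that the two non-dominant roots share a modulus is not automatic; it requires the same Galois-transitivity argument. Your fallback of passing to the minimal polynomial \(g\) of a single dominant root does not repair this: the four dominant roots need not be conjugate to one another, e.g.\ \(f\) could a priori split as a product of two cubics, each with one dominant complex pair and one real root \(\pm R^{-2}\); then \(g\) is a cubic whose roots do lie on two circles, but Dubickas--Smyth yields \emph{no} contradiction for it (the cubic case is consistent with the theorem). Ruling this out is precisely the irreducibility argument the paper carries out before invoking \autoref{thm:bicycle}.

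A second, related problem is that you misstate what Dubickas--Smyth delivers. The theorem does not force "pairwise commensurable arguments" of the dominant roots, and there is no "collapse to a single real frequency". Its conclusions are divisibility constraints on the degree (\(d\) even or divisible by \(3\)) together with the existence of a power of a root that is a real non-totally-real cubic unit or a Salem half-norm. The paper uses these to contradict non-degeneracy outright (e.g.\ at order six, \(\gamma^m\) real forces \(\gamma^m/\ogamma^m=1\), violating \ref{h2}; at orders five and seven, \(d=5,7\) is neither even nor a multiple of \(3\)): the four-dominant-root case is \emph{vacuous} for non-degenerate reversible sequences, not reducible to a tractable one-frequency analysis. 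Finally, your stratification of the remaining cases omits three dominant roots (a real root together with a complex pair, possibly from different irreducible factors), and your Baker/equidistribution sketch for two dominant roots is in effect an attempt to re-derive the Mignotte--Shorey--Tijdeman/Vereshchagin theorem; the clean move, which the paper makes, is to prove "at most three dominant roots" and then cite that theorem (\autoref{prop:maxevalues}) wholesale.
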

An integer linear recurrence sequence \(\seq[\infty]{X_n}{n=0}\) is \emph{reversible} if it satisfies a recurrence relation of the form \eqref{eq:rec} such that \(a_0=\pm1\).
As observed in Lipton et al.~\cite{lipton2021skolem}, given an integer linear recurrence sequence \(\seq[\infty]{X_n}{n=0}\), the unique bi-infinite extension \(\seq[n=\infty]{X_n}{n=-\infty}\) has integral terms if and only if \(\seq[\infty]{X_n}{n=0}\) is reversible (this claim follows from a classical observation for Fatou rings \cite{Fat04}).
We can also characterise the subclass of while loops (as in \eqref{eq:loop}) naturally associated with reversible sequences:
the update matrix \(A\) with characteristic polynomial \(f\) is \emph{unimodular}; that is, \(A\) has integer entries and \(\det(A)=-f(0)=\pm 1\).
If \(A\) is unimodular, then \(A^{-1}\) also has integer entries.
Thus, again, \(\seq[n=\infty]{X_n}{n=-\infty}\) with each \(X_n = \vect{b}^\top A^n \vect{w}\) (as above) is integer-valued.

Unimodular matrices appear elsewhere in the dynamical systems literature.
Some classes lead to prototypical invertible maps with hyperbolic and ergodic properties; specifically, classes of \emph{linear toral automorphisms} \(T_A \colon \R^d \to \R^d\) given by \(T_A(\vect{x}) = A\vect{x}\)  \cite{katok1995introduction}.

A famous example of a reversible sequence is the Fibonacci sequence, which is defined by the initial values \(X_0=0\), \(X_1=1\) and for each \(n\in\N_0\), \(X_{n+2} = X_{n+1} + X_n\).
The Fibonacci sequence can be uniquely extended to a bi-infinite sequence of integer values \(\langle \ldots, 5, -3, 2, -1, 1, 0, 1, 1, 2, 3, 5, \ldots\rangle\).

In the sequel, we call the restricted variant of the Skolem Problem for reversible sequences the \emph{Reversible Skolem Problem}.

\paragraph*{Background}
Let us assess the current state of play with regards to the decidability of the Skolem Problem.
A classical result due to Skolem \cite{skolem1934verfahren} (which was later generalised by Mahler \cite{mahler1935taylor,mahler1956taylor}, and Lech \cite{lech1952recurring})
 states that \(\{n\in\N : X_n=0\}\) is the union of a finite set together with a finite number of (infinite) arithmetic progressions. 
The phenomenon that causes these vanishing arithmetic progressions is termed \emph{degeneracy}.
A sequence is \emph{degenerate} when one of the ratios of two distinct characteristic roots of the sequence is a root of unity.
These arithmetic progressions can be determined algorithmically and so, from the viewpoint of verification, to decide the Skolem Problem it suffices to consider \emph{non-degenerate} recurrence sequences---those sequences that have only finitely many zeros.
Indeed, this is where the difficulty lies: there is no known general method to compute this finite set. 
We refer the interested reader to Corollary 1.20 and Chapter 2 in \cite{everest2003recurrence} for further details.
In summary, all known proofs of the Skolem--Mahler--Lech Theorem (as it is now known)  are non-constructive and so the decidability of the Skolem Problem remains open.

Limited progress has been made on the decidability of the Skolem Problem when one considers linear recurrence sequences of low order.
Groundbreaking work by Mignotte, Shorey, and Tijdeman~\cite{mignotte1984distance}, and, independently, Vereshchagin~\cite{vereshchagin1985occurence} establish the following.
\begin{theorem} \label{prop:maxevalues}
The Skolem Problem is decidable for the class of non-degenerate linear recurrences with at most three simple characteristic roots that are maximal in modulus.
\end{theorem}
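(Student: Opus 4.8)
The plan is to produce an effectively computable threshold \(N_0\) beyond which no term vanishes, so that deciding whether a zero occurs reduces to checking the finitely many values \(X_0,\ldots,X_{N_0}\). First I would write the sequence in exponential-polynomial form. By hypothesis at most three characteristic roots attain the maximal modulus \(\rho\), and these dominant roots are simple; collecting them separately yields algebraic constants \(c_j\) with
\[
	X_n = \sum_{\lvert\lambda_j\rvert = \rho} c_j\lambda_j^n + E_n,
\]
where \(E_n\) gathers the contribution of the roots of smaller modulus and satisfies \(\lvert E_n\rvert \le C\sigma^n\) for some effectively computable \(C>0\) and some \(\sigma<\rho\) (any polynomial factors arising from repeated non-dominant roots are absorbed into \(C\) and \(\sigma\)). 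Dividing through by \(\rho^n\) isolates the dominant part and exhibits the error as exponentially small relative to it.

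Next I would classify the dominant roots. Each is either real (\(\pm\rho\)) or one of a complex-conjugate pair \(\rho e^{\pm\iu\theta}\). The two real roots \(\rho\) and \(-\rho\) cannot both occur, since their ratio \(-1\) is a root of unity, contradicting non-degeneracy; for the same reason \(\theta/\pi\) is irrational whenever a conjugate pair is present, so \(\mu := e^{\iu\theta}\) is not a root of unity. Hence, with at most three dominant roots, the only configurations are a single real root, a single conjugate pair, or one real root together with one conjugate pair. Writing \(c = \lvert c\rvert e^{\iu\phi}\) for the coefficient of \(\rho e^{\iu\theta}\) and \(a\) for the real coefficient of the real root \(\epsilon\rho\) (\(\epsilon\in\{+1,-1\}\)), the normalised dominant part becomes
\[
	z_n := \rho^{-n}(X_n - E_n) = a\epsilon^n + 2\lvert c\rvert\cos(n\theta + \phi).
\]
In the one- and two-root cases \(z_n\) equals \(a\epsilon^n\) (bounded away from \(0\)) or \(2\lvert c\rvert\cos(n\theta+\phi)\); the substantive case is the three-root one.

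The heart of the argument is a polynomial lower bound \(\lvert z_n\rvert \ge C' n^{-c'}\), valid for all large \(n\). I would obtain it as follows. If \(\lvert z_n\rvert\) is small, then \(\cos(n\theta+\phi)\) is forced close to the algebraic value \(-a\epsilon^n/(2\lvert c\rvert)\), so the unit-circle point \(e^{\iu(n\theta+\phi)} = \mu^n e^{\iu\phi}\) lies close to one of finitely many algebraic targets \(e^{\iu\psi}\) (one for each admissible sign and parity); equivalently \(\lvert\mu^n\zeta - 1\rvert\) is small for a fixed algebraic \(\zeta = e^{\iu(\phi - \psi)}\) on the unit circle. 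Since \(\mu\) is not a root of unity, \(\mu^n\zeta = 1\) holds for at most one \(n\) per target, and for all other \(n\) Baker's theorem on linear forms in logarithms gives the effective bound \(\lvert\mu^n\zeta - 1\rvert \ge C'' n^{-c''}\). Provided the target is not a point where the derivative \(-2\lvert c\rvert\sin\psi\) vanishes, this angular estimate transfers to \(\lvert z_n\rvert \ge C' n^{-c'}\), whence for all sufficiently large \(n\),
\[
	\lvert X_n\rvert \ge \rho^n\lvert z_n\rvert - \lvert E_n\rvert \ge C'\rho^n n^{-c'} - C\sigma^n > 0,
\]
since a polynomially decaying quantity eventually dominates an exponentially decaying one. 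As every constant is effective, this inequality yields a computable \(N_0\), and the remaining indices are checked directly.

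The main obstacle is the three-dominant-root case, and within it the effective control of the Baker-type estimate together with the boundary configurations. The most delicate is the tangential case \(\lvert a\rvert = 2\lvert c\rvert\), where \(\sin\psi = 0\), the leading-order analysis degenerates, and \(z_n\) has constant sign and vanishes only when \(\cos(n\theta+\phi) = \mp1\) exactly—so one must argue separately that \(n\theta+\phi\in\pi\Z\) for at most one \(n\). One must also ensure that the finitely many exceptional indices with \(\mu^n\zeta = 1\) are effectively bounded. By contrast, the one- and two-root cases are routine: the former needs only a crude exponential comparison, and the latter the same linear-forms estimate applied to the targets \(e^{\iu\psi} = \pm\iu\), where the derivative never vanishes.
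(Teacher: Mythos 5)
The paper offers no proof of this statement: it is quoted as a known result of Mignotte, Shorey, and Tijdeman \cite{mignotte1984distance} and, independently, Vereshchagin \cite{vereshchagin1985occurence}, with only the remark that those proofs rest on algebraic number theory, \(p\)-adic analysis, and Baker's theorem for linear forms in logarithms. Your proposal reconstructs precisely that Baker-based route---isolate the dominant part \(a\epsilon^n + 2\lvert c\rvert\cos(n\theta+\phi)\), translate its smallness into proximity of \(\mu^n\zeta\) to \(1\) for fixed algebraic unit-modulus numbers \(\mu\) (not a root of unity) and \(\zeta\), and beat the exponentially small tail \(E_n\) with the effective polynomial lower bound on \(\lvert \mu^n\zeta - 1\rvert\)---so there is no divergence of method to report, and the sketch is sound. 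The delicate points you flag do resolve, essentially as you indicate: in the tangential case \(\lvert a\rvert = 2\lvert c\rvert\) the cosine deviates from \(\mp 1\) quadratically in the angular distance, so Baker's estimate enters squared, which is still polynomial in \(n\) and hence still dominates the exponentially decaying error; and an exceptional index with \(\mu^n\zeta = 1\) is unique per target and effectively bounded, since \(n\,h(\mu) = h(\zeta)\) for the absolute logarithmic Weil height and \(h(\mu)>0\) by Kronecker's theorem---such indices must indeed be examined individually, because there the dominant part vanishes exactly and \(X_n = E_n\) can genuinely be zero. Two further small remarks: the nonvanishing of \(a\) and \(c\) is automatic because the paper defines characteristic roots via the minimal polynomial; and your reduction of smallness to a single angular variable is exactly what fails with four dominant roots (two conjugate pairs give two independent angles), which is why this theorem marks the current decidability frontier discussed in the paper.
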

As a consequence, the Skolem Problem is decidable for linear recurrences of order at most four.
The aforementioned papers employ techniques from \(p\)-adic analysis and algebraic number theory and, in addition, Baker's theorem for
linear forms in logarithms of algebraic numbers.  
Unfortunately the route taken via Baker's Theorem does not appear to extend easily to recurrences of higher order. 

A class of recurrence sequences of order five that the state of the art cannot handle impedes further progress on the decidability of the Skolem Problem \cite{ouaknine2012decision}.
The minimal polynomial for each member of this class has four distinct roots \(\alpha, \oalpha, \beta, \obeta \in \C\) (two pairs of complex-conjugate roots) such that \(|\alpha|=|\beta|\), and a fifth real root \(\gamma\) of strictly smaller modulus.
Hence the terms of such a sequence \(\seq{X_n}{n}\) are given by an exponential polynomial of the form \(X_n = a(\alpha^n + \oalpha^n) + b(\beta +\obeta^n) + c\gamma^n\).
Here \(a,b,c\in\R\) are algebraic numbers and, as far as we are aware, there is  no known general procedure to determine \(\{n\in\N : X_n =0\}\) when  \(|a|\neq |b|\).

\paragraph*{Complexity}
In \cite{blondel2002skolem}, Blondel and Portier proved that the Skolem Problem is NP-hard.
As a brief aside, let us consider the complexity of the reversible variant of the Skolem Problem.
One of the questions considered by S.~Akshay et al.~\cite{akshay2017complexity} is the complexity of the Skolem Problem for the restricted class of linear recurrence sequences whose characteristic roots are all roots of unity (the so-called \emph{Cyclotomic Skolem Problem}).
Those authors showed, by a reduction from the Subset-Sum Problem, that the Cyclotomic Skolem Problem is NP-hard.
Because the characteristic polynomial associated with each element in this restricted class is given by a product of cyclotomic polynomials, it follows that each instance of the Cyclotomic Skolem Problem is an instance of the Reversible Skolem Problem.
Thus the Reversible Skolem Problem is also NP-hard.

\paragraph*{Contributions}
The main contribution in this note is an alternative proof of \autoref{thm:main}.
By comparison to the extensive case analysis employed in \cite{lipton2021skolem}, we use results in number theory for Galois conjugates that obey polynomial identities.
In particular, we make repeated use of a result due to Dubickas and Smyth \cite{dubickas2001remak} for algebraic integers that lie alongside all their Galois conjugates on two (but not one) concentric circles centred at the origin.

For context, Dubickas and Smyth's result is part of a large corpus of research on algebraic numbers whose conjugates lie on a conic or a union of conics.
Let \(\alpha\) be an algebraic number with Galois conjugates \(\alpha=\alpha_1,\ldots, \alpha_d\).
We call the set \(S(\alpha) := \{\alpha_1,\ldots, \alpha_d\}\) the \emph{conjugate set} of  \(\alpha\).
When \(S(\alpha)\) is a subset of the unit circle, a result of Kronecker's (a weaker version of \autoref{thm:kronecker}) proves that \(\alpha\) is a root of unity.
Number theorists have long-studied classes of algebraic integers where one or more of the conjugates leaves the circle; for example, a real algebraic integer \(\alpha\) is a \emph{Salem number} if \(\alpha>1\), \(\alpha^{-1}\in S(\alpha)\), and the remaining conjugates all lie on the unit circle, i.e., \(S(\alpha) = \{\alpha^{\pm 1}, \eu^{\pm\iu \theta_2}, \ldots, \eu^{\pm\iu \theta_d}\}\).

With regards to the Reversible Skolem Problem at order eight, we exhibit a family of recurrence sequences that, as far as we know, are not amenable to standard techniques and so decidability is very much open.
The authors of \cite{lipton2021skolem} also demonstrated a concrete family of examples in this regard.
It is interesting to note that the techniques used and families obtained are very different.
Each member of our family is an octic palindromic polynomial (the same is not true for the family of examples in \cite{lipton2021skolem}).
The coefficients of a \emph{palindromic} polynomial form a palindromic string of integers.
For example, two palindromes that are also family members are
\(x^8 + x^7 - x^6 + x^5 + 5x^4 + x^3 - x^2 + x + 1\) and
\(x^8 + x^7 - 3x^6 + x^5 + 9x^4 + x^3 - 3x^2 + x + 1\).
Using any modern computer algebra system, it is easy to verify that the roots of each polynomial satisfy the following.
First, the roots lie on two (but not one) concentric circles centred at the origin.
Second, no ratio of any two of its roots is a root of unity.
The calculations involved in preparing these (and later) examples were performed in PARI/GP  \cite{PARI2}.
In \autoref{ssec:galois}, we study the Galois groups of irreducible palindromic octics in the aforementioned family as a further investigation into the symmetries between their roots.

In Subsections~\ref{ssec:SRPP} and \ref{ssec:SPunitnorm}, we prove new decidability results on restricted variants of the Positivity and Skolem Problems using Dubickas and Smyth's theorem and make suggestions for further work in these directions.
For brevity, we refer to the Positivity Problem for the class of simple reversible integer linear recurrences as the \emph{Simple Reversible Positivity Problem}.
Here a linear recurrence is \emph{simple} if the associated characteristic polynomial has no repeated roots.
We have the following results:
	\begin{corollary} \label{cor:srpp}
	The Simple Reversible Positivity Problem is decidable for integer-valued linear recurrences of order at most ten.
	\end{corollary}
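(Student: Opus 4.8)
The plan is to reduce the Simple Reversible Positivity Problem---deciding whether \(X_n\ge 0\) for every \(n\)---to an analysis of the characteristic roots of maximal modulus, exploiting the fact that reversibility forces every root to be an algebraic unit. First I would dispose of degeneracy in the standard way. By the effective Skolem--Mahler--Lech decomposition one partitions \(\N_0\) into finitely many arithmetic progressions \(\{Ln+c\}\) along each of which the subsequence \(\langle X_{Ln+c}\rangle_{n}\) is non-degenerate; since \(X_n\ge 0\) for all \(n\) if and only if each such subsequence is everywhere non-negative, it suffices to decide Positivity for non-degenerate instances. Crucially, each subsequence is an arithmetic subsampling of the integral bi-infinite extension and is therefore again reversible, and it remains simple of order at most ten (distinct roots \(\alpha_i\) that survive the subsampling give distinct powers \(\alpha_i^L\), the coincidences being exactly the root-of-unity ratios removed by the reduction).

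Next I would exploit the unit structure. Writing the characteristic polynomial as a product \(f=f_1\cdots f_k\) of distinct monic irreducible integer polynomials, the relation \(\prod_i f_i(0)=f(0)=\pm 1\) forces each \(f_i(0)=\pm 1\), so the roots of every irreducible factor are algebraic units. The conjugate set of such a factor is then distributed over one or more concentric circles centred at the origin. A factor whose roots all share a single modulus must, being a set of conjugate units, lie on the unit circle, whence \autoref{thm:kronecker} identifies it as cyclotomic; such factors contribute only eventually periodic terms and are absorbed by the non-degeneracy reduction. The remaining factors have conjugate sets spread over at least two circles, and here I would invoke the theorem of Dubickas and Smyth: for an irreducible factor whose conjugate set lies on exactly two concentric circles it constrains the angular distribution of the roots, and in particular lets us determine whether the arguments of the outer-circle roots are rational multiples of \(\pi\).

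With this structural information the asymptotics of \(X_n\) are governed by the terms attached to the roots of maximal modulus \(R\), which satisfies \(R>1\) since the product of all root moduli equals \(|f(0)|=1\) while not every root is a root of unity. I would then case-split on the dominant configuration. When a single root is dominant it is real, and Positivity reduces to the sign of its coefficient together with a finite initial check. When the dominant roots form a single complex-conjugate pair \(R\eu^{\pm\iu\theta}\), the Dubickas--Smyth analysis tells us whether \(\eu^{\iu\theta}\) is a root of unity: if it is, the dominant angular factor \(\cos(n\theta+\phi)\) is periodic, and over one period either some residue class has a strictly negative dominant term (so \(X_n\) fails to be positive) or the dominant term vanishes and the corresponding subsequence is a strictly shorter reversible instance on which we recurse; if \(\eu^{\iu\theta}\) is not a root of unity then the points \(n\theta \bmod 2\pi\) equidistribute, \(\cos(n\theta+\phi)\) approaches \(-1\), and \(X_n\) is driven negative infinitely often. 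The order bound \(d\le 10\) is what confines us to these tractable configurations, limiting both the degrees of the two-circle factors and the number of roots able to share the modulus \(R\).

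The main obstacle I anticipate is the case in which several roots attain the maximal modulus---possibly contributed by different irreducible factors---so that the leading behaviour is a sum of oscillating exponentials with a priori incommensurate arguments, precisely the regime in which Positivity is open in general. The crux of the proof is therefore to show that the two-circle rigidity supplied by Dubickas and Smyth, together with the unit condition and \(d\le 10\), either forbids such configurations outright or forces the dominant arguments to be simultaneously rational multiples of \(\pi\). Once the dominant arguments are commensurate, the sign pattern is eventually periodic, and a bounded search over one period---with the boundary residues on which the dominant part cancels handled by recursion into a lower-order reversible sequence---completes the decision procedure.
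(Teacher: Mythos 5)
Your proposal has a genuine gap, and it sits exactly where you flag it. The paper's proof of \autoref{cor:srpp} is not a from-scratch decision procedure: it rests entirely on the theorem of Ouaknine and Worrell \cite{ouaknine2014positivity} that Positivity is decidable for \emph{all} simple linear recurrence sequences of order at most nine, together with their identification of the single root configuration at order ten that their method cannot handle (one dominant real root, four dominant complex-conjugate pairs, and one non-dominant root). The paper then shows that this one configuration cannot occur for a reversible sequence: the constant coefficient $\pm 1$ makes every root a unit, so each dominant root has a conjugate inside the unit disk and is therefore conjugate to the unique non-dominant root; hence the degree-ten polynomial is irreducible with all its roots on exactly two concentric circles, and \autoref{thm:bicycle} gives an immediate contradiction (the $d=3m$ case fails since $3\nmid 10$, and the $d=2m$ case requires five roots on each circle, incompatible with the $9{+}1$ split). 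Your proposal never invokes the order-nine result, so you are left having to decide Positivity yourself in the presence of several dominant roots---which, as you yourself write, is ``precisely the regime in which Positivity is open in general.''

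Moreover, the crux you defer is not merely unproven; it is false as stated. You hope that two-circle rigidity plus the unit condition and $d\le 10$ ``either forbids such configurations outright or forces the dominant arguments to be simultaneously rational multiples of $\pi$.'' The paper's own \autoref{cor:infinitefamily} exhibits infinitely many palindromic octics that are reversible, simple, have four dominant roots, and have no conjugate ratio equal to a root of unity---so at order eight (hence within your order bound) such configurations are neither forbidden nor commensurate. Positivity for those sequences is decidable only because Ouaknine and Worrell's theorem covers every simple sequence of order at most nine via Baker-type machinery, not because of any root rigidity, and this is precisely the ingredient your plan lacks. A secondary inaccuracy: \autoref{thm:bicycle} does not let you ``determine whether the arguments of the outer-circle roots are rational multiples of $\pi$''; its conclusions are divisibility constraints on the degree, prescribed numbers of roots on each circle, and the existence of a power of the root that is a real cubic unit or a Salem half-norm. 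Your equidistribution analysis for a single dominant complex pair is fine, but it does not extend to the configurations that actually carry the difficulty here.
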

	\begin{corollary} \label{cor:spunitnorm}
	The Skolem Problem is decidable for rational-valued linear recurrences that satisfy a relation of the form		 \(X_{n+5} = a_{4} X_{n+4} + a_{3} X_{n+3} + a_{2} X_{n+2} + a_1 X_{n+1} \pm X_n\)
 with \(a_1, a_2, a_3, a_4 \in\Q\).
	\end{corollary}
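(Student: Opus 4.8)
The plan is to adapt the strategy behind \autoref{thm:main} to order five, retaining only the single feature that the constant coefficient is a unit. Write the characteristic polynomial as $f(x)=x^5-a_4x^4-a_3x^3-a_2x^2-a_1x\mp1\in\Q[x]$; the hypothesis that the last coefficient is $\pm1$ is precisely the statement that the product of the five characteristic roots has modulus one. As is standard, it suffices to decide the non-degenerate case, since degeneracy is detectable and the Skolem--Mahler--Lech decomposition replaces a degenerate sequence by finitely many non-degenerate subsequences along arithmetic progressions. I would also assume the minimal recurrence has order exactly five, as otherwise the sequence has order at most four and is decidable by the consequence of \autoref{prop:maxevalues}. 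So suppose $\seq{X_n}{n}$ is non-degenerate, simple, of order five.

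The next step is to classify the roots by their modulus and invoke \autoref{prop:maxevalues}, which settles every instance with at most three simple roots of maximal modulus. Since $f$ has real coefficients and odd degree, there is at least one real root, and a short analysis shows that the only configurations left with four or more dominant roots are the following two, both of which place the entire root set on at most two concentric circles centred at the origin. Either all five roots lie on the unit circle---so that a real root equal to $\pm1$ appears---or else four roots $\alpha,\oalpha,\beta,\obeta$ of common modulus $\rho$ occupy an outer circle while the fifth, a real root $\gamma$, lies strictly inside; here the norm constraint forces $|\gamma|=\rho^{-4}$ and hence $\rho>1$, so the roots lie on two (but not one) concentric circles of radii $\rho^{-4}$ and $\rho$. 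Any remaining four-dominant-root configuration, for instance one with two real dominant roots $\pm\rho$, is degenerate and already excluded.

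It is the genuine two-circle configuration---exactly the order-five obstruction described above---where I would bring in Dubickas and Smyth's theorem. Their result sharply constrains the way the conjugate set of a dominant root can distribute across the two circles, and I would exploit it according to whether $f$ is irreducible over $\Q$. If $f$ factors, then $\gamma$ is carried into a proper factor, the dominant roots are confined to a single circle inside a factor of degree at most four, and the problem recombines from lower-order, decidable pieces. If $f$ is irreducible, the five conjugates form one orbit split as four-plus-one across the circles; here I would use the symmetry that Dubickas and Smyth impose between the circles to relate the exponential-polynomial coefficients $a$ and $b$ attached to $\alpha$ and $\beta$, and argue that this relation either forces a ratio such as $\alpha/\beta$ to be a root of unity, contradicting non-degeneracy, or places the dominant part in the balanced regime $|a|=|b|$, where the two oscillatory terms combine into a single product and the zero set becomes decidable by the methods behind \autoref{prop:maxevalues}. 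The one-circle case is treated separately: the forced real root $\pm1$ lets me peel off a linear factor and reduce the dominant behaviour, but---in contrast with the integer setting of \autoref{thm:main}, where \autoref{thm:kronecker} would immediately make such a sequence degenerate---the roots here need not be algebraic integers, so Kronecker's theorem does not apply and the bounded sub-sequence must be handled directly.

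The main obstacle is this irreducible two-circle case, and within it the step of converting the geometric symmetry supplied by Dubickas and Smyth into the coefficient relation that excludes the recalcitrant $|a|\neq|b|$ possibility. The delicacy is twofold: because $a_1,\ldots,a_4$ are only rational, I must use the formulation of Dubickas and Smyth's theorem for conjugate sets of algebraic numbers rather than algebraic integers, drawing on the unit constant term solely through the product-of-roots identity; and I must confirm that at order five their constraint is stringent enough to decide the configuration---forcing either degeneracy or the balanced regime---in sharp contrast to the octic palindromic examples, where the analogous four-plus-four two-circle configuration persists non-degenerately and decidability remains open.
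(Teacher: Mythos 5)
Your high-level route (reduce to a non-degenerate order-five instance, classify the roots by modulus, use Dubickas--Smyth on the two-circle configuration, finish with \autoref{prop:maxevalues}) is the paper's route, but at the decisive step you replace the paper's argument with one that does not work. In the irreducible two-circle case the paper's proof is purely a degree count: after clearing denominators the characteristic polynomial has the form \(a_5x^5-a_4x^4-a_3x^3-a_2x^2-a_1x\pm a_5\in\Z[x]\), so each root is a \emph{unit-norm} algebraic number; the general form of \autoref{thm:bicycle} (\cite[Theorem 2.1]{dubickas2001remak}) applies to unit-norms, and its conclusion forces the degree of any two-circle configuration to be even or a multiple of three --- impossible for degree five. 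That numerical contradiction is the entire proof; no analysis of the exponential-polynomial coefficients is needed. Your substitute --- extracting from Dubickas--Smyth a relation between the coefficients \(a\) and \(b\), and claiming that the surviving ``balanced regime'' \(|a|=|b|\) is decidable ``by the methods behind \autoref{prop:maxevalues}'' --- is unsound on both counts: the theorem constrains only the multiplicative structure of the conjugate set and can say nothing about \(a\) and \(b\), which are determined by the initial values rather than by \(f\); and \autoref{prop:maxevalues} is a statement about the number of dominant roots, not about balanced coefficients. You explicitly defer the key verification (``I must confirm that at order five their constraint is stringent enough''); that deferred step is precisely the proof.

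The two cases you set aside are also genuine gaps, and they cannot be patched with the paper's tools. For the reducible two-circle case, ``the problem recombines from lower-order, decidable pieces'' is not a valid inference --- Skolem instances do not decompose across sums (the known hard order-five integer instance in the paper's Background is itself a sum of an order-four and an order-one sequence, each individually harmless) --- and Dubickas--Smyth is not even applicable there, since unit-norm is a property of the \emph{minimal} polynomial and the condition \(|a_5|=|a_0|\) does not pass to factors: for instance \((4x-1)(x^4+x^3+3x^2+2x+4)\) has the unit-norm form, yet its quartic factor has all four roots on \(|z|=\sqrt{2}\) and norm \(4\). For the one-circle case you correctly observe that \autoref{thm:kronecker} fails for non-integers, but ``handle the bounded subsequence directly'' is not a method, and this case contains instances beyond every result cited in the paper: \((x-1)(2x^2-3x+2)(2x^2-x+2)=4x^5-12x^4+19x^3-19x^2+12x-4\) has five distinct roots on the unit circle and, by Niven's theorem together with the fact that \((3\pm\sqrt{105})/8\) is not an algebraic integer, no ratio of two of its roots is a root of unity; the associated recurrences \(X_{n+5}=3X_{n+4}-\tfrac{19}{4}X_{n+3}+\tfrac{19}{4}X_{n+2}-3X_{n+1}+X_n\) lie in the class of \autoref{cor:spunitnorm}, are non-degenerate, and have five simple dominant roots, outside the reach of \autoref{prop:maxevalues}. (In fairness, this last configuration is also passed over silently by the paper's own one-paragraph argument, which tacitly assumes the irreducible two-circle situation; your instinct that the unit-circle case is the danger point is sound, but your proposal does not close it, and it is unclear that it can be closed with known techniques.)
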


\paragraph*{Structure}
The remainder of this paper is structured as follows.
In the next section we review necessary preliminary material.
In \autoref{sec:decidability}, we give a new and novel proof of \autoref{thm:main}.
In \autoref{sec:octics}, we construct a family of octic palindromes that shows the current state of the art cannot settle decidability of the Reversible Skolem Problem at order eight and then discuss the Galois groups associated with the irreducible members of this family.
In the final section, \autoref{sec:future}, we discuss directions and motivate this discussion with the proofs of Corollaries~\ref{cor:srpp} and \ref{cor:spunitnorm}.
\section{Preliminaries}

\subsection{Recurrence Sequences}

A sequence \(\seq[\infty]{X_n}{n=0}\) of integers satisfying a recurrence relation of the form \eqref{eq:rec} with fixed integer constants \(a_0,a_1,\ldots, a_{d-1}\) such that \(a_0\neq 0\) is a \emph{linear recurrence sequence}.
The sequence \(\seq{X_n}{n}\) is then wholly determined by the recurrence relation and the initial values \(X_0, X_1,\ldots, X_{d-1}\).
The polynomial \(f(x) = x^d - a_{d-1}x^{d-1} - \cdots - a_1 x - a_0\) is the \emph{characteristic polynomial} associated with relation \eqref{eq:rec}.
From our earlier definition, it is clear that \(\seq{X_n}{n}\) is reversible if and only if \(f(0) = \pm 1\). %
There is a recurrence relation of minimal length associated to \(\seq{X_n}{n}\) and we call the characteristic polynomial of this minimal length relation the \emph{minimal polynomial} of \(\seq{X_n}{n}\). 
The \emph{order} of a linear recurrence sequence is the degree of its minimal polynomial.

Let \(f\) be the minimal polynomial of a linear recurrence sequence \(\seq{X_n}{n}\) and \(K\) the splitting field of \(f\).
The polynomial \(f\) factorises as a product of powers of distinct linear factors like so \(f(x) = \prod_{\ell=1}^m (x-\lambda_\ell)^{n_\ell}
\).
The constants \(\lambda_1,\lambda_2,\ldots, \lambda_m\in K\) are the \emph{characteristic roots} of \(\seq{X_n}{n}\) with multiplicities \(n_1,n_2,\ldots, n_m\).
One can realise the terms of a linear recurrence sequence as an \emph{exponential polynomial} \(X_n = \sum_{\ell =1}^m p_\ell(n) \lambda_\ell^n\) where the \(\lambda_\ell\) are the aforementioned characteristic roots of \(\seq{X_n}{n}\) and the polynomial coefficients \(p_\ell\in K[x]\) are determined by the initial values.
We say a characteristic root of \(\seq{X_n}{n}\) is \emph{dominant} if in the set of characteristic roots of \(\seq{X_n}{n}\) it is maximal in modulus.
Thus, by \autoref{prop:maxevalues}, the Skolem Problem is decidable for the class of non-degenerate linear recurrence sequences with at most three dominant characteristic roots \cite{mignotte1984distance, vereshchagin1985occurence}.
\subsection{Number Theory}

We shall assume some familiarity with Galois theory and the theory of number fields.
The necessary background material can be found in a number of standard textbooks~\cite{cohen1993computational, stewart2016algebraic}.

Recall the following theorem due to Kronecker \cite{kronecker1857}.
			\begin{theorem} \label{thm:kronecker}
				Let \(f\in\Z[x]\) be a monic polynomial such that \(f(0)\neq 0\).  
				Suppose that all the roots of \(f\) have absolute value at most \(1\), then \(f\) is a product of cyclotomic polynomials.  
				Therefore all the roots of \(f\) are roots of unity.
			\end{theorem}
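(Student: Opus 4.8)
The plan is to show that every root of $f$ is a root of unity by a pigeonhole argument applied to the polynomials obtained by raising the roots of $f$ to successive powers. Write $\alpha_1,\ldots,\alpha_d$ for the roots of $f$ listed with multiplicity, and for each $k\in\N$ set
\[
    f_k(x) = \prod_{i=1}^{d}\bigl(x-\alpha_i^{\,k}\bigr).
\]
First I would check that each $f_k$ lies in $\Z[x]$. Its coefficients are, up to sign, the elementary symmetric polynomials evaluated at $\alpha_1^k,\ldots,\alpha_d^k$; these are symmetric functions of the $\alpha_i$ with integer coefficients, hence integer polynomials in the elementary symmetric functions of the $\alpha_i$ themselves, which are integers because $f$ is monic over $\Z$. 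Thus each $f_k$ is a monic polynomial of degree $d$ in $\Z[x]$.

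Second I would bound the coefficients. Since every root satisfies $|\alpha_i|\le 1$ by hypothesis, we also have $|\alpha_i^k|\le 1$, so the $j$-th elementary symmetric polynomial in the $\alpha_i^k$ has absolute value at most $\binom{d}{j}$. Consequently the coefficients of every $f_k$ are integers drawn from a fixed finite range depending only on $d$. There are therefore only finitely many possibilities for the polynomial $f_k$, so by the pigeonhole principle there exist indices $k<l$ with $f_k=f_l$. Isolating this finiteness --- the observation that confining the roots to the closed unit disc forces the integer power-polynomials into a finite set --- is the step I expect to carry the real content of the argument.

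Finally I would extract the root-of-unity conclusion from the equality $f_k=f_l$. This equality says that the multisets $\{\alpha_i^k\}$ and $\{\alpha_i^l\}$ coincide, so there is a permutation $\pi$ of $\{1,\ldots,d\}$ with $\alpha_i^{\,l}=\alpha_{\pi(i)}^{\,k}$ for every $i$. A short induction then gives $\alpha_i^{\,l^m}=\alpha_{\pi^m(i)}^{\,k^m}$ for all $m\ge 0$; taking $m=N$ equal to the order of $\pi$ yields $\alpha_i^{\,l^N}=\alpha_i^{\,k^N}$, whence $\alpha_i^{\,l^N-k^N}=1$. Because $l>k\ge 1$ the exponent $l^N-k^N$ is a positive integer, so each $\alpha_i$ is a root of unity. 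It then remains only to package this conclusion: the minimal polynomial over $\Q$ of a root of unity is a cyclotomic polynomial, and since $f$ is monic in $\Z[x]$ with every root a root of unity, $f$ is a product of cyclotomic polynomials (with multiplicities matching those of its roots), which is precisely the assertion of the theorem.
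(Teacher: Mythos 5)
Your proof is correct, and it is the classical pigeonhole argument (essentially Kronecker's original one): the paper itself offers no proof of this statement, merely citing Kronecker, so there is nothing in the paper to compare against. All three steps are sound --- integrality of the $f_k$ via symmetric functions, the coefficient bound $\binom{d}{j}$ forcing finiteness, and the permutation--induction step yielding $\alpha_i^{\,l^N-k^N}=1$. One small point worth making explicit: passing from $\alpha_i^{\,l^N}=\alpha_i^{\,k^N}$ to $\alpha_i^{\,l^N-k^N}=1$ requires $\alpha_i\neq 0$, and this is exactly where the hypothesis $f(0)\neq 0$ enters; as written, your argument uses it silently. With that remark added, the proof is complete, including the final packaging via Gauss's lemma that writes the monic $f$ as a product of the (cyclotomic) minimal polynomials of its roots.
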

		
Thus, if \(f\in\Z[x]\) is the characteristic polynomial of a reversible linear recurrence sequence such that the roots of \(f\) all lie in the unit disk \(\{z\in\C : |z|\le 1\}\).
Then the roots of \(f\) are all roots of unity.
It follows that the associated recurrence sequence is either order one (and is thus constant) or degenerate.
In either case the Skolem Problem is decidable.
Thus in the sequel we shall always assume, without loss of generality, that the dominant roots of \(f\) lie on a circle  with radius strictly larger than \(1\).

In the sequel, our construction of an infinite family of octics uses the following corollary of Vieta's formulae.
	\begin{lemma} \label{lem:algintegers} Suppose that \(f\in\Z[x]\) is a monic irreducible polynomial such that \(f(x) = \prod_{i=1}^d (x-\lambda_i)\).
	Let \(f_n(x) := \prod_{i=1}^d (x-\lambda_i^n)\).
	Then \(f_n\in\Z[x]\) for each \(n\in\N\).	
	\end{lemma}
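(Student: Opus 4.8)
The plan is to show that each coefficient of $f_n$ is a rational integer by exhibiting it as a symmetric function of the roots $\lambda_1, \ldots, \lambda_d$ with integer coefficients. By Vieta's formulae, the coefficients of $f_n(x) = \prod_{i=1}^d (x - \lambda_i^n)$ are, up to sign, the elementary symmetric polynomials $e_k(\lambda_1^n, \ldots, \lambda_d^n)$ for $k = 1, \ldots, d$. Each such expression is a symmetric polynomial in $\lambda_1, \ldots, \lambda_d$, since permuting the $\lambda_i$ merely permutes the $\lambda_i^n$; moreover, when expanded as a polynomial in the $\lambda_i$, it has integer coefficients.

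First I would invoke the Fundamental Theorem of Symmetric Polynomials over $\Z$: any symmetric polynomial in $\lambda_1, \ldots, \lambda_d$ with coefficients in $\Z$ can be written as a polynomial with coefficients in $\Z$ in the elementary symmetric polynomials $e_1(\lambda), \ldots, e_d(\lambda)$. Applying this to each $e_k(\lambda_1^n, \ldots, \lambda_d^n)$ yields an identity $e_k(\lambda_1^n, \ldots, \lambda_d^n) = P_{k,n}\bigl(e_1(\lambda), \ldots, e_d(\lambda)\bigr)$ with $P_{k,n} \in \Z[y_1, \ldots, y_d]$.

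Next I would observe that, again by Vieta's formulae, each $e_j(\lambda) = e_j(\lambda_1, \ldots, \lambda_d)$ is precisely $(-1)^j$ times the corresponding coefficient of $f$, and hence lies in $\Z$ because $f \in \Z[x]$. Substituting these integers into the integer polynomials $P_{k,n}$ shows that each $e_k(\lambda_1^n, \ldots, \lambda_d^n)$, and therefore each coefficient of $f_n$, is a rational integer. This establishes $f_n \in \Z[x]$.

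There is no genuine obstacle here; the statement is classical and the only point meriting care is that one obtains integrality rather than mere rationality. This comes for free because the Fundamental Theorem of Symmetric Polynomials holds over $\Z$, so no separate appeal to the fact that a rational algebraic integer is an integer is strictly needed. As an alternative route one could argue via the splitting field $K$ of $f$: each $\lambda_i$ is an algebraic integer, hence so is each coefficient of $f_n$; and since every element of $\mathrm{Gal}(K/\Q)$ permutes the $\lambda_i$ it fixes these symmetric coefficients, placing them in $\Q$, whence in $\Z$. I note in passing that irreducibility of $f$ is never used, and that the argument applies verbatim to any monic polynomial in $\Z[x]$.
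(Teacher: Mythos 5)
Your proposal is correct and follows essentially the same route as the paper: express the coefficients of \(f_n\) as symmetric polynomials in the \(\lambda_i\), invoke the fundamental theorem of symmetric polynomials over \(\Z\), and conclude integrality via Vieta's formulae applied to \(f\). Your write-up is in fact more complete than the paper's terse sketch, and your closing observation that irreducibility of \(f\) is never needed is accurate.
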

	\begin{proof}
		The coefficients of the polynomial \(f_n\) are determined by symmetric polynomials in \(d\) variables.
		By the fundamental theorem of symmetric polynomials, each symmetric polynomial is given by a \(\Z\)-linear combination of elementary symmetric polynomials.
		The result follows as a straightforward application of Vieta's formulae and the evaluation of elementary symmetric polynomials over conjugate algebraic integers.
	\end{proof}

The roots of an irreducible polynomial are necessarily Galois conjugates.
We use the term \emph{conjugate ratios} for the ratios between two distinct roots of an irreducible polynomial.
A non-zero algebraic number \(\alpha\) is \emph{reciprocal} if \(\alpha\) is conjugate to \(\alpha^{-1}\).  Let \(\tau\) be a Salem number whose minimal polynomial has degree \(2d\) or a reciprocal quadratic.  
In the former case, \(S(\tau) = \{\tau^{\pm 1}, \tau_2^{\pm 1},\ldots, \tau_d^{\pm 1}\}\) and in the latter, \(S(\tau) = \{\tau^{\pm 1}\}\).
An algebraic number \(\psi\) is a \emph{Salem half-norm} if \(\psi = \tau^{\epsilon_1} \tau_2^{\epsilon_2} \cdots \tau_d^{\epsilon_d}\) for some such \(\tau\) and \(\varepsilon_j = \pm 1\) for each \(j\in\{1,\ldots, d\}\).
The properties of Salem half-norms are discussed further in \cite{dubickas2001remak}.

In the ring of algebraic integers of a given number field \(K\), \(\gamma\in K\) is a \emph{unit} (sometimes an \emph{algebraic unit}) if it has a multiplicative inverse \(\delta\) so that \(\gamma \delta = \delta \gamma =1\).
Let \(\alpha\in K\) be an algebraic integer.
Then the constant coefficient of the minimal polynomial \(f\in\Z[x]\) of \(\alpha\) is equal to \(\pm 1\) if and only if \(\alpha\) is a unit. 
This observation follows easily from norm considerations and the fact that the constant coefficient of \(f\) (up to sign) is given by the product of \(\alpha\) and its Galois conjugates.
Since the characteristic polynomial \(f\in\Z[x]\) of a reversible sequence \(\seq{X_n}{n}\) has constant coefficient \(\pm 1\), we deduce that each of the characteristic roots of \(\seq{X_n}{n}\) is a unit.
In the sequel we make frequent use of the following simple observation.
	\begin{lemma} \label{lem:inunitdisk}
		If \(\alpha\) is an algebraic unit that lies on the circle \(|z|=R\) with \(R>1\), then a conjugate of \(\alpha\) lies in the interior of the unit disk.
	\end{lemma}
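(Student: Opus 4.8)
The plan is to exploit the defining property of an algebraic unit recalled immediately above the statement: the constant coefficient of the minimal polynomial \(f\in\Z[x]\) of \(\alpha\) equals \(\pm 1\) and, up to sign, this coefficient is the product of \(\alpha\) with all of its Galois conjugates. First I would write the conjugate set as \(S(\alpha) = \{\alpha = \alpha_1, \alpha_2, \ldots, \alpha_d\}\) and record the consequence \(\prod_{i=1}^{d} \lvert\alpha_i\rvert = 1\), since the absolute value of the norm of a unit is \(1\).

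The heart of the argument is then a short pigeonhole estimate on moduli. Because \(\alpha\) lies on the circle \(\lvert z\rvert = R\), we have \(\lvert\alpha_1\rvert = R > 1\), and hence \(\prod_{i=2}^{d} \lvert\alpha_i\rvert = 1/R < 1\). Note that \(\alpha\) cannot be a rational integer, as the only rational units are \(\pm 1\) and these have modulus \(1 \neq R\); thus \(d \geq 2\) and the remaining conjugate set is nonempty. If every \(\alpha_i\) with \(i \geq 2\) satisfied \(\lvert\alpha_i\rvert \geq 1\), then the product \(\prod_{i=2}^{d} \lvert\alpha_i\rvert\) would be at least \(1\), contradicting the bound just obtained. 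Hence at least one conjugate of \(\alpha\) has modulus strictly less than \(1\) and therefore lies in the interior of the unit disk \(\{z\in\C : \lvert z\rvert < 1\}\).

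There is no substantial obstacle here: the entire content is the norm identity, and the only point requiring a moment's care is confirming that the argument does not collapse in the degenerate case \(d = 1\), which is excluded precisely because \(R > 1\) forces \(\alpha \neq \pm 1\). Once the identity \(\prod_{i=1}^{d} \lvert\alpha_i\rvert = 1\) is in place, the proof reduces to the two-line pigeonhole step above.
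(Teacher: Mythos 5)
Your proof is correct and follows exactly the argument the paper intends: the paper states this lemma as a ``simple observation'' without an explicit proof, relying on the norm considerations mentioned immediately beforehand (the constant coefficient of the minimal polynomial, which equals \(\pm 1\) for a unit, is up to sign the product of all Galois conjugates). Your pigeonhole step on the moduli, together with the check that \(d \geq 2\), fills in precisely that intended argument.
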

Key to the proofs in the sequel is a powerful result due to Dubickas and Smyth \cite{dubickas2001remak} concerning polynomial identities between the roots of irreducible polynomials.	
\autoref{thm:bicycle} gives necessary conditions for a unit in the algebraic integers and all its Galois conjugates to lie on two (but not one) concentric circles centred at the origin	\cite{dubickas2001remak}.
In fact, Dubickas and Smyth prove a far more general result \cite[Theorem 2.1]{dubickas2001remak}, but we need only the specialised version for units below.
	\begin{theorem}	\label{thm:bicycle}
	 Suppose that \(\alpha\) is a unit in the algebraic integers of degree \(d\) lying, with all its Galois conjugates, on two circles \(|z|=r\) and \(|z|=R\), but not just one.
	 Without loss of generality assume that at most half of the conjugates lie on \(|z|=r\).
	 Then, one of the following holds:
	 	\begin{enumerate}
	 		\item \(d=3m\), \(R= r^{-1/2}\) such that there are \(d/3\) conjugates of \(\alpha\) on \(|z|=r\), and the remaining \(2d/3\) lie on \(|z|=r^{-1/2}\).
	 		Assume, without loss of generality, \(|\alpha|=r\).
	 		Then, in addition, there exists an \(n\in\N\) such that \(\alpha^n\) is a real, but non-totally real, cubic unit.
	 		\item \(d=2m\), \(R = r^{-1}\) where \(R>1\) without loss of generality, and \(d/2\) Galois conjugates of \(\alpha\) lie on each circle.
	 		Further, there exists an \(n\in\N\) such that \(\alpha^n =: \psi\)  is a Salem half-norm defined by a Salem number or a reciprocal quadratic.
	 	\end{enumerate}
	\end{theorem}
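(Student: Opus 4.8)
The plan is to deduce \autoref{thm:bicycle} from the general classification of Dubickas and Smyth \cite[Theorem 2.1]{dubickas2001remak} by imposing the extra hypothesis that \(\alpha\) is an algebraic unit. Their general theorem classifies, up to replacing \(\alpha\) by a suitable power, every algebraic number whose full conjugate set lies on two (but not one) concentric circles \(|z|=r\) and \(|z|=R\); its output is a short list of admissible configurations, each specified by the degree, the two radii, the way the conjugates are distributed between the circles, and an arithmetic normal form for a power of the number. My first step is to record this classification and to fix notation: write \(k\) for the number of conjugates on \(|z|=r\) and \(d-k\) for the number on \(|z|=R\), and normalise so that \(k\le d/2\), which is precisely the stated hypothesis that at most half of the conjugates lie on \(|z|=r\).

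The key specialising observation is the unit condition. Since \(\alpha\) is a unit, the product of \(\alpha\) with all its Galois conjugates equals the constant coefficient of its minimal polynomial up to sign, namely \(\pm1\); hence \(\prod_i |\alpha_i| = 1\). In terms of the distribution this reads
\begin{equation*}
	r^{k}\,R^{\,d-k} = 1 .
\end{equation*}
I would then intersect this single multiplicative constraint with each configuration produced by \cite[Theorem 2.1]{dubickas2001remak}. The two prototypical unit configurations are exactly the cubic and the Salem cases: a real, non-totally-real cubic number has one real conjugate and one complex-conjugate pair, giving the distribution \(1:2\), while a Salem half-norm inherits the reciprocal \(\pm1\) symmetry of its defining Salem number (or reciprocal quadratic), giving the balanced distribution \(1:1\). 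Scaling to degree \(d\) yields \(k=d/3\) and \(k=d/2\) respectively, and substituting into \(r^{k}R^{d-k}=1\) forces \(rR^{2}=1\), i.e.\ \(R=r^{-1/2}\), in the first case and \(rR=1\), i.e.\ \(R=r^{-1}\), in the second. This is how the radius relations in the statement arise: they are consequences of the unit hypothesis rather than of the general theorem alone, since a non-unit would replace the right-hand side by \(|f(0)|\). The normalisation \(k\le d/2\) then places \(\alpha\) on the minority circle in Case~1 and makes the split symmetric in Case~2.

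It remains to upgrade the arithmetic normal form to the unit setting and to supply the ``in addition'' clauses. Here I would use that a power of a unit is again a unit: the power \(\alpha^{n}\) furnished by the general theorem is therefore a unit, so the cubic number of Case~1 is in fact a \emph{cubic unit}, and the Salem number (or reciprocal quadratic) underlying the half-norm of Case~2 is itself a unit, consistent with the minimal polynomial of a Salem number being reciprocal and hence monic with constant coefficient \(\pm1\). The main obstacle is the case analysis hidden inside ``intersect with the classification'': I must verify that \(r^{k}R^{d-k}=1\) is compatible with, and only with, the two listed configurations, excluding every other admissible distribution either because it violates the norm constraint or because it collapses the two circles into one, contrary to hypothesis. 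Confirming that the relations \(R=r^{-1/2}\) and \(R=r^{-1}\) are genuinely forced, and that no further balanced or cubic-type unit configurations survive the translation from the general theorem, is where the substantive work lies.
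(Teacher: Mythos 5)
Your proposal is correct and takes essentially the same route as the paper: the paper gives no independent proof of this statement, citing it directly as the specialisation to units of the general two-circle classification \cite[Theorem 2.1]{dubickas2001remak}. Your derivation of that specialisation---the norm constraint \(r^k R^{d-k}=1\) for units forcing \(R=r^{-1/2}\) and \(R=r^{-1}\) in the two admissible distributions, together with the observation that powers of units are units---is exactly the content of the reduction the paper asserts.
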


Let us explain the term \emph{totally real} in the last theorem.
An algebraic number \(\alpha\) is \emph{totally real} if \(\alpha\) and all its Galois conjugates are real.
A number field 
\(K\) is \emph{totally real} if \(K = \Q[\alpha]\) such that \(\alpha\) is totally real.

From this point to the end of the subsection,  the terminology and results we recall are used only in \autoref{sec:octics} (and so are not required for the proof of \autoref{thm:main}).

A field is \emph{Kroneckerian} if it is either 
    a totally real algebraic number field or a totally imaginary quadratic extension of a totally real field.
In the sequel, we make use of the following observation about complex conjugation lying in the centre of the Galois group of a Kroneckerian field (see \cite[Chapter 6]{schinzel2000polynomials}).    
    \begin{corollary} \label{cor:schinzel}
     A number field \(K\) is Kroneckerian if and only if for every \(\alpha\in K\) one has \(\overline{\alpha}\in K\) and for every embedding \(\sigma\) of \(K\) into \(\C\) one has \(\overline{\alpha^\sigma} = \overline{\alpha}^\sigma\).
    \end{corollary}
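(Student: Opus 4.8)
The plan is to prove both implications by carefully tracking how the analytic complex conjugation on \(K\), fixed via an embedding \(K\subseteq\C\), interacts both with the abstract field automorphisms of \(K\) and with the various embeddings \(\sigma\colon K\to\C\). Throughout I would keep the analytic conjugation \(\alpha\mapsto\overline{\alpha}\) notationally distinct from any abstract automorphism until the two are shown to coincide.

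For the reverse implication I would start from the first hypothesis, that \(\overline{\alpha}\in K\) for every \(\alpha\in K\): this says that conjugation restricts to a field automorphism \(c\) of \(K\) of order dividing two. If \(c\) is the identity, then \(K\subseteq\R\), and the second hypothesis \(\overline{\alpha^\sigma}=\overline{\alpha}^\sigma=\alpha^\sigma\) forces \(\alpha^\sigma\in\R\) for every \(\alpha\) and every embedding \(\sigma\), so that \(K\) is totally real. If instead \(c\) has order two, let \(K_0:=K^c\) be its fixed field, an index-two subfield. For \(\alpha\in K_0\) one has \(\overline{\alpha}=c(\alpha)=\alpha\), so the second hypothesis again yields \(\alpha^\sigma\in\R\) for every \(\sigma\), whence \(K_0\) is totally real. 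To finish I would show \(K\) is totally imaginary: were some embedding \(\sigma\) to have real image, then \(\overline{\alpha^\sigma}=\alpha^\sigma\) for all \(\alpha\), giving \(\sigma(c(\alpha))=\sigma(\alpha)\) and, by injectivity of \(\sigma\), the contradiction \(c=\mathrm{id}\). Thus \(K\) is a totally imaginary quadratic extension of the totally real field \(K_0\), i.e.\ Kroneckerian.

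For the forward implication I would treat the two defining cases separately. When \(K\) is totally real the fixed embedding places \(K\subseteq\R\), every embedding lands in \(\R\), and both identities are immediate. When \(K\) is a totally imaginary quadratic extension \(K=K_0(\sqrt{-\delta})\) of a totally real field \(K_0\), with \(\delta\in K_0\) totally positive, I would first identify analytic conjugation with the canonical generator \(c\) of \(\mathrm{Gal}(K/K_0)\): since \(K_0\subseteq\R\) is fixed by conjugation and \(\sqrt{-\delta}\) is purely imaginary in the fixed embedding, conjugation sends \(\sqrt{-\delta}\mapsto-\sqrt{-\delta}\), so \(\overline{\alpha}=c(\alpha)\in K\), verifying the first condition. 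For the second condition I would use that each embedding \(\sigma\) of \(K\) is prescribed by a real embedding \(\tau\) of \(K_0\) together with a choice \(\sigma(\sqrt{-\delta})=\pm\iu\sqrt{\tau(\delta)}\), which is purely imaginary precisely because \(\tau(\delta)>0\) by total positivity. Writing \(\alpha=a+b\sqrt{-\delta}\) with \(a,b\in K_0\), a direct computation then gives \(\overline{\alpha^\sigma}=\tau(a)\mp\iu\tau(b)\sqrt{\tau(\delta)}=\sigma(c(\alpha))=\overline{\alpha}^\sigma\).

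I expect this forward CM case to be the main obstacle, since it is where the real content---that conjugation lies in the centre of the Galois group of the Galois closure---is used. The delicate point is that analytic conjugation must coincide with the \emph{single} abstract automorphism \(c\) simultaneously under every embedding, and this is exactly what the total positivity of \(\delta\) guarantees, by keeping the image of \(\sqrt{-\delta}\) off the real axis under all real embeddings \(\tau\) of \(K_0\). Maintaining the distinction between the abstract \(c\) and analytic conjugation until this identification is in place is the step most prone to error.
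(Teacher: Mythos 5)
Your proof is correct, but there is in fact nothing in the paper to compare it against: the paper states \autoref{cor:schinzel} without any proof, citing Chapter~6 of Schinzel's book, and then uses it as a black box in the proof of \autoref{thm:keilthy}. Your argument therefore functions as a self-contained replacement for that citation rather than an alternative to an internal proof. Both directions check out. In the reverse direction, hypothesis~(i) correctly makes complex conjugation an automorphism \(c\) of \(K\) of order dividing two, the case \(c=\mathrm{id}\) yields total reality, and when \(c\) has order two your injectivity argument (\(\sigma(c(\alpha))=\sigma(\alpha)\Rightarrow c=\mathrm{id}\)) properly rules out any real embedding of \(K\), giving \(K\) as a totally imaginary quadratic extension of \(K_0=K^c\). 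In the forward direction, the parametrisation of embeddings of \(K=K_0(\sqrt{-\delta})\) by a real embedding \(\tau\) of \(K_0\) together with a sign choice \(\pm\iu\sqrt{\tau(\delta)}\), valid precisely because \(\delta\) is totally positive, gives the commutation identity by direct computation. Two standard facts are used tacitly and would merit a line each in a written version: first, to conclude that \(K_0\) is totally real you need that every embedding of \(K_0\) extends to an embedding of \(K\), since hypothesis~(ii) quantifies only over embeddings of \(K\); second, the existence of a \emph{totally positive} \(\delta\) with \(K=K_0(\sqrt{-\delta})\) follows by writing \(K=K_0(\sqrt{d})\) and noting that total imaginarity forces \(\tau(d)<0\) for every \(\tau\), as otherwise some extension of \(\tau\) would be a real embedding. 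Neither point is a gap in substance.
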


 A unit is \emph{unimodular} if it lies on the unit circle in \(\C\).
In \cite{MacCluer1975}, MacCluer and Parry prove that a normal imaginary field contains unimodular units other than roots of unity exactly when its real subfield is not normal over \(\Q\).
 Daileda \cite{daileda2006} generalises this result and provides the following classification of the number fields that have unimodular units that are not roots of unity.
 Recall that a number field \(K\) is a \emph{CM-field} if \(K\) is a totally complex quadratic extension of a totally real field.
	\begin{theorem} \label{thm:daileda}
		Let \(K\) be a number field closed under complex conjugation. 
		Then \(K\) contains unimodular units that are not roots of unity if and only if \(K\) is imaginary and not a CM-field.
	\end{theorem}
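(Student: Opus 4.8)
The plan is to reformulate unimodularity algebraically and then read off the answer from the action of complex conjugation on the unit group. Write \(c\colon K\to K\) for the automorphism induced by complex conjugation (well defined precisely because \(K\) is closed under complex conjugation). A unit \(\alpha\) lies on the unit circle exactly when \(\alpha\overline{\alpha}=1\), that is, when \(c(\alpha)=\alpha^{-1}\); since every root of unity satisfies this, the unimodular units form a subgroup \(U_1\le U_K\) containing \(\mu_K\), the group of roots of unity in \(K\). The theorem therefore amounts to deciding when \(U_1\) strictly contains \(\mu_K\), and I would split into the two implications below. Throughout, \emph{\(K\) is imaginary} means \(K\not\subseteq\R\), equivalently \(c\neq\mathrm{id}\); note this already excludes real fields that fail to be totally real.

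For the forward implication I would argue the contrapositive: if \(K\) is not imaginary, or is a CM-field, then every unimodular unit is a root of unity. If \(K\) is not imaginary then \(c=\mathrm{id}\), so \(c(\alpha)=\alpha^{-1}\) forces \(\alpha^{2}=1\) and \(\alpha=\pm1\). If \(K\) is a CM-field then \(K\) is Kroneckerian, so by \autoref{cor:schinzel} conjugation commutes with every embedding: \(\overline{\sigma(\alpha)}=\sigma(\overline{\alpha})\) for all \(\sigma\). Hence for a unimodular unit \(\alpha\) and any embedding \(\sigma\) we get \(\lvert\sigma(\alpha)\rvert^{2}=\sigma(\alpha)\,\overline{\sigma(\alpha)}=\sigma(\alpha\overline{\alpha})=\sigma(1)=1\), so all Galois conjugates of \(\alpha\) lie on the unit circle. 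By Kronecker's theorem (\autoref{thm:kronecker}), \(\alpha\) is a root of unity.

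For the reverse implication---the main obstacle---I would produce a unimodular unit of infinite order from the hypothesis that \(K\) is imaginary and not a CM-field. The key device is that for \emph{any} unit \(u\) the element \(\alpha:=u\,c(u)^{-1}\) satisfies \(c(\alpha)=c(u)\,u^{-1}=\alpha^{-1}\), and so is automatically a unimodular unit. It remains to choose \(u\) so that \(\alpha\) is not a root of unity. Since \(K\) is imaginary and not CM it is not Kroneckerian, so by \autoref{cor:schinzel} there is an embedding \(\sigma\) with \(\sigma\circ c\neq\overline{\sigma}\); because \(\sigma\circ c=\sigma\) would force \(c=\mathrm{id}\), this means \(c\) interchanges two distinct archimedean places. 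I would then pass to the logarithmic embedding \(L\colon U_K\to\R^{r_1+r_2}\) of Dirichlet's unit theorem, on which \(c\) acts by the induced permutation of places, and produce a unit \(u\) with \(c\cdot L(u)\neq L(u)\); for this \(u\) one has \(L(\alpha)=L(u)-c\cdot L(u)\neq0\), whence \(\alpha\notin\mu_K\).

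The delicate point, where I expect to spend the most care, is the last step: translating ``\(c\) moves an archimedean place'' into ``\(c\) acts nontrivially on the unit lattice.'' Concretely, if \(c\) swaps places \(v\) and \(v'\) then \(e_v-e_{v'}\) is a nonzero vector in the \((-1)\)-eigenspace of \(c\) lying in the trace-zero hyperplane (using that \(c\) preserves the type, real or complex, of each place and hence the defining functional of that hyperplane). Since \(L(U_K)\) is a full lattice in this hyperplane, the \((-1)\)-eigenspace meets it nontrivially, which is exactly what guarantees a suitable \(u\). This eigenspace computation also makes transparent why the two excluded cases fail: there \(c\) fixes every place, the \((-1)\)-eigenspace is trivial, and \(U_1=\mu_K\).
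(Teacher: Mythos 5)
Your proposal cannot be checked against an internal argument, because the paper does not actually prove this statement: \autoref{thm:daileda} is quoted from Daileda \cite{daileda2006} (as a generalisation of the MacCluer--Parry result \cite{MacCluer1975} for normal fields) and is used later only as a black box. Judged on its own merits, your proof is correct. The forward direction is the standard one: a field inside \(\R\) has only \(\pm 1\) on the unit circle, and in the CM case \autoref{cor:schinzel} transports the relation \(\alpha\overline{\alpha}=1\) to every embedding, so all conjugates of a unimodular unit are unimodular and \autoref{thm:kronecker} makes it a root of unity. The converse, which is the real content, also holds up at each delicate point: closure under conjugation plus failure of the Kroneckerian property does produce an embedding \(\sigma\) with \(\sigma\circ c\neq\overline{\sigma}\); since \(c\neq\mathrm{id}\) (because \(K\not\subseteq\R\)) one also has \(\sigma\circ c\neq\sigma\), so \(c\) genuinely moves the archimedean place of \(\sigma\); type-preservation keeps \(e_v-e_{v'}\) in the trace-zero hyperplane; and since \(c\) preserves the full lattice \(L(U_K)\), a trivial action on the lattice would force a trivial action on the whole hyperplane, contradicting the nonzero \((-1)\)-eigenvector. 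Hence some unit \(u\) satisfies \(c\cdot L(u)\neq L(u)\), and \(\alpha=u\,c(u)^{-1}\) is a unimodular unit with \(L(\alpha)\neq 0\), so of infinite order. For context, this is essentially the mechanism of Daileda's published proof, which performs the same Dirichlet-unit-theorem computation phrased as a rank comparison between \(U_K\) and the unit group of the fixed field \(K\cap\R\) of \(c\): the rank of the image of \(u\mapsto u/\overline{u}\) equals the dimension of your \((-1)\)-eigenspace and vanishes exactly for real and CM fields. Your eigenspace packaging is equivalent bookkeeping, and it has the virtue of relying only on results already stated in the paper (\autoref{thm:kronecker}, \autoref{cor:schinzel}) together with Dirichlet's unit theorem.
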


\subsection{Group Theory}

In the sequel we employ the notation \(S_n\) for the symmetric group on \(n\) elements, \(A_n\) for the alternating group on \(n\) elements, \(D_n\) for the Dihedral group of order \(2n\), \(C_n\) for the cyclic group on \(n\) elements, and \(K_4\) for the Klein \(4\)-group.

The action of \(G\) on a set \(X\) is \emph{transitive} if for every pair \(x,y\in X\) there is a \(g\in G\) such that \(gx = y\); that is to say, there is a single group orbit.
We note here (and again later) that \(S_4, A_4, D_4, C_4\), and \(K_4\) are the transitive subgroups of \(S_4\).

\section{Proof of \autoref{thm:main}} \label{sec:decidability}

We briefly outline our route to proving \autoref{thm:main}.
We claim that if \(\seq{X_n}{n}\) is a non-degenerate reversible integer recurrence sequence of order at most seven,
then \(\seq{X_n}{n}\) has at most three dominant characteristic roots.
The decidability of the Skolem Problem for such instances then follows from \autoref{prop:maxevalues}.
The above claim follows as a corollary of the next theorem.

\begin{theorem} \label{thm:threeroots}
 No monic polynomial \(f\in\Z[x]\) with constant coefficient \(\pm 1\) of degree at most seven satisfies the following two properties:
	\begin{enumerate}[itemindent=1.2em] 
	\renewcommand{\labelenumi}{\textcolor{lipicsGray}{ \sffamily\bfseries\upshape\mathversion{bold}(\theenumi)}}
    \renewcommand{\theenumi}{H\arabic{enumi}}
		\item { \label{h1} \(f\) has at least four distinct dominant roots; and} 
		\item { \label{h2} no quotient of two distinct roots of \(f\) is a root of unity.}
	\end{enumerate}
\end{theorem}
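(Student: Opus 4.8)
The plan is to argue by contradiction: suppose some monic \(f\in\Z[x]\) with constant coefficient \(\pm1\) and \(\deg f\le 7\) satisfies both (H1) and (H2). First I would set up the arithmetic. Factoring \(f\) into monic irreducibles over \(\Z\), the product of the constant terms equals \(\pm1\), so each constant term is \(\pm1\) and hence \emph{every} root of \(f\) is an algebraic unit. Let \(R\) be the largest modulus of a root. If \(R\le 1\), then \autoref{thm:kronecker} forces every root to be a root of unity, so the quotient of any two distinct roots (there are at least four, by (H1)) is a root of unity, contradicting (H2); hence \(R>1\). Call an irreducible factor \emph{active} if it has a root on \(|z|=R\). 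By \autoref{lem:inunitdisk} each active factor also has a root in the open unit disc, so it has degree at least \(2\) and its roots do not all lie on a single circle. Since \(\deg f\le 7\), there are at most three active factors, and by (H1) they carry at least four roots of modulus \(R\) between them.

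The uniform engine for contradictions is \autoref{thm:bicycle} combined with the power map \(z\mapsto z^n\). For an active factor \(g\) whose roots happen to lie on exactly two circles, \autoref{thm:bicycle} applied to a root \(\alpha\) gives either \(\deg g=3m\) (with \(2\deg g/3\) conjugates on the outer circle and \(\alpha^n\) a cubic unit) or \(\deg g=2m\) (half the conjugates on each circle and \(\alpha^n\) a Salem half-norm). Whenever \(\deg(\alpha^n)<\deg g\), the map \(z\mapsto z^n\) sends the \(\deg g\) conjugates of \(\alpha\) onto strictly fewer values, so two distinct conjugates \(\alpha_i\ne\alpha_j\) satisfy \((\alpha_i/\alpha_j)^n=1\); this root-of-unity conjugate ratio violates (H2). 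Two consequences are immediate: an active case~1 factor with \(\deg g=6\) is impossible, since its conjugates map \(2\)-to-\(1\) onto a cubic (and a fibre must pair equal-modulus roots, hence two outer ones), so no single factor can supply all four dominant roots on two circles; and because \(5\) is neither \(3m\) nor \(2m\), \emph{no} unit of degree \(5\) can have its conjugates on exactly two circles at all.

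Next I would count dominant roots against the budget. A two-circle active factor places at most \(2\deg g/3\) (case~1) or \(\deg g/2\) (case~2) roots on \(|z|=R\); with the case~1 sextic excluded, the only two-circle contributor beating the ratio \(\tfrac12\) is a case~1 \emph{cubic}, which contributes a complex-conjugate pair on \(|z|=R\). Since the case~2 contributions total at most \((\deg f)/2\le 3\), reaching four dominant roots forces at least one case~1 cubic together with auxiliary active factors—reciprocal quadratics, a case~2 quartic, or a factor whose conjugates occupy three or more circles—all of which must place their dominant roots on the \emph{same} circle \(|z|=R\). The outer radius of a case~1 cubic satisfies \(R^2=\rho^{-1}\), where \(\rho\) is its unique interior real root, so \(R\) pins down \(\rho\); two distinct such cubics therefore have distinct radii and cannot both be active.

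I expect the principal obstacle to be exactly this shared-radius analysis, sharpened by the elimination of factors whose conjugates lie on three or more circles. For the latter, the surviving candidates (degrees \(6\) and \(7\) with four conjugates on \(|z|=R\)) would force the remaining interior conjugates to be real and to sit at distinct moduli, a configuration outside the direct reach of the two-circle \autoref{thm:bicycle} that must be excluded by hand using the degree-\(7\) budget, complex conjugation, and the integrality of the power sums. For the former, one compares the defining data forced by \autoref{thm:bicycle}: \(R^2\) generating the cubic field, versus \(R\) being a reciprocal-quadratic unit, versus the Salem structure in case~2. These are mutually incompatible unless some cross-factor quotient of modulus \(1\) is a root of unity, again contradicting (H2). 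Once every configuration is ruled out, no \(f\) can meet both (H1) and (H2), establishing \autoref{thm:threeroots}.
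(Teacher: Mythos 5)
Your factor-by-factor strategy has a sound skeleton, and several pieces are correct and even elegant: every root of $f$ is a unit, $R>1$ follows from \autoref{thm:kronecker}, and your degree-drop pigeonhole (if $\deg(\alpha^n)<\deg g$ then two distinct conjugates share an $n$-th power, so a conjugate ratio is a root of unity) correctly disposes of the two-circle quintic and the case-1 sextic, which is exactly what the paper's \autoref{prop:unit5} and the final step of \autoref{prop:unit6} accomplish. The genuine gap is that the cases you explicitly defer are the heart of the theorem, and \autoref{thm:bicycle} cannot reach them. Concretely: (i) an irreducible sextic or septic whose roots occupy \emph{three or more} circles --- four dominant roots plus non-dominant roots at two distinct smaller moduli --- is precisely the configuration treated in \autoref{prop:unit6} and \autoref{prop:unit7}, and since Dubickas--Smyth only constrains roots on exactly two circles, your plan to exclude it ``by hand using the degree-7 budget, complex conjugation, and the integrality of the power sums'' is a hope, not an argument; (ii) the mixed factorisations, chiefly a case-1 cubic times a case-2 quartic (and a case-1 cubic times a quartic on three circles, total degree $3+4=7$ with $2+2$ dominant roots, a configuration your enumeration never cleanly isolates), are dismissed with ``mutually incompatible unless some cross-factor quotient of modulus 1 is a root of unity,'' again without proof.

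The missing idea is the paper's key device, which needs no irreducibility hypothesis: the four dominant roots form two complex-conjugate pairs satisfying the exact identity $\alpha\oalpha=\beta\obeta\ (=R^2)$, and every automorphism $\sigma$ of the splitting field of $f$ preserves it. Taking $\sigma$ to map $\alpha$ to a Galois conjugate inside the unit disk (which exists by \autoref{lem:inunitdisk}) turns the identity into a modulus equation that cannot balance. This single argument forces the non-dominant roots in degree six to share a modulus, yields irreducibility, rules out the three-circle septic, and also kills your cubic-times-quartic case: there the left-hand side has modulus $|\rho|\cdot R=R^{-1}$ while the right-hand side has modulus in $\{R^2,1,R^{-2}\}$. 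Without this Galois-invariance argument (or a worked-out substitute), your deferred cases remain open and the proof is incomplete. A smaller, fixable slip: for two active case-1 cubics, $R$ determines only $|\rho|=R^{-2}$, not $\rho$ itself; the two interior real roots could be $R^{-2}$ and $-R^{-2}$, so ``distinct radii'' is false as stated, and you must instead invoke \ref{h2} (their ratio would be $-1$) to conclude that two such cubics cannot coexist.
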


Thus all that remains is to prove \autoref{thm:threeroots}.
This result is an immediate consequence of the sequence of Propositions~\ref{prop:unit5}, \ref{prop:unit6}, and \ref{prop:unit7} below.
In each of the proofs of these propositions we play a similar game: we assume, for a contradiction, that there exists a polynomial \(f\in\Z[x]\) (of degree five, six, or seven respectively) that satisfies hypotheses \ref{h1} and \ref{h2}.
We show that such a candidate is necessarily irreducible.
We then employ \autoref{thm:bicycle} to derive a contradiction: such a candidate cannot satisfy both \ref{h1} and \ref{h2} and, at the same time, satisfy the restrictive root identities prescribed by \autoref{thm:bicycle}.

For the avoidance of doubt, this route to \autoref{thm:main} is similar to that carved out by \cite{lipton2021skolem}.
The contribution of this paper is the novel application of \autoref{thm:bicycle}.
Indeed, our assumption, that each of the characteristic roots of a recurrence in our class of non-degenerate reversible sequences is a unit, leads to (rather startling) restrictive polynomial relations between Galois conjugates.

We begin our sequence of propositions.
\begin{proposition} \label{prop:unit5}
	No monic polynomial \(f\in\Z[x]\) of degree at most five with constant term \(\pm 1\) satisfies hypotheses \ref{h1} and \ref{h2}.
\end{proposition}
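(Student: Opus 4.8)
The plan is to assume, for a contradiction, that such an \(f\) exists and then force it to be irreducible of degree exactly five, at which point \autoref{thm:bicycle} yields a contradiction on divisibility grounds. First I would note that \ref{h1} requires four distinct dominant roots, so \(\deg f \ge 4\), and that the hypothesis \(f(0)=\pm1\) makes every root an algebraic unit (writing \(f = \prod_i g_i\) into monic irreducible factors, each \(g_i(0)=\pm1\), so each root is a unit). Let \(R\) denote the common modulus of the dominant roots. I may assume \(R>1\): otherwise all roots lie in the closed unit disk, \autoref{thm:kronecker} forces them to be roots of unity, and then the quotient of any two distinct roots is a root of unity, contradicting \ref{h2}.

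Next I would dispose of the cases in which \emph{every} root of \(f\) is dominant, namely \(\deg f = 4\) (where \ref{h1} already places all four roots on \(|z|=R\)) and \(\deg f = 5\) with five distinct dominant roots. In each of these, every root lies on the single circle \(|z|=R>1\). But each root is a unit on \(|z|=R\) with \(R>1\), so by \autoref{lem:inunitdisk} it admits a conjugate—necessarily another root of \(f\)—strictly inside the unit disk, and no root of \(f\) lies there. This contradiction leaves only \(\deg f = 5\) with exactly four dominant roots on \(|z|=R\) and a single remaining root \(\gamma\) with \(|\gamma|<R\).

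I would then establish that \(f\) is irreducible. Applying \autoref{lem:inunitdisk} to any one of the four dominant units produces a conjugate strictly inside the unit disk; since \(\gamma\) is the only root of \(f\) that can possibly lie there, this simultaneously forces \(|\gamma|<1\) and forces \(\gamma\) to be a Galois conjugate of that dominant root. Running this over all four dominant roots shows that each shares its minimal polynomial with \(\gamma\); hence all five roots lie in a single irreducible factor, which by degree count must be \(f\) itself. Thus \(f\) is the minimal polynomial of a degree-five unit whose conjugate set lies on the two concentric circles \(|z|=R\) (four conjugates) and \(|z|=|\gamma|\) (one conjugate), and on no single circle.

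Finally I would invoke \autoref{thm:bicycle} with \(d=5\), taking \(|z|=|\gamma|\) as the inner circle, which carries at most half (\(1 \le 5/2\)) of the conjugates. The theorem's conclusions demand either \(d=3m\) with a \((d/3,\,2d/3)\) split of the conjugates between the two circles, or \(d=2m\) with a \((d/2,\,d/2)\) split. Since \(5\) is odd and not divisible by \(3\), neither alternative can hold—indeed the actual split is \(1:4\)—which is the desired contradiction. The main obstacle I anticipate is the irreducibility step: one must argue, using only the unit structure and \autoref{lem:inunitdisk}, that the four dominant roots cannot be distributed across two or more irreducible factors (each such factor would contribute its own interior conjugate and over-count the degree) and that the sub-dominant root is genuinely interior to the unit disk. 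Once irreducibility is secured, the parity clash with \autoref{thm:bicycle} is immediate.
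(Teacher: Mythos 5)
Your proposal is correct and follows essentially the same route as the paper's own proof: Kronecker's theorem to force \(R>1\), \autoref{lem:inunitdisk} to pin down exactly four simple dominant roots plus one non-dominant root, irreducibility via every dominant root being conjugate to that single interior root, and finally the parity clash with \autoref{thm:bicycle} since \(5\) is neither even nor a multiple of \(3\). You simply spell out in more detail the steps the paper compresses (the unit-factorisation of \(f\), the all-roots-dominant cases, and the irreducibility argument), which is fine.
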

\begin{proof}
 Assume, for a contradiction, that such an \(f\) of degree \(d\le 5\) exists.
 By \autoref{thm:kronecker}, the dominant roots lie on the circle \(|z|=R\) for some \(R>1\) for otherwise the roots of \(f\) are necessarily all roots of unity, which is not permitted under hypothesis \ref{h2}.
 Each of the dominant roots of \(f\) is a unit in the algebraic integers and so, by \autoref{lem:inunitdisk}, has a Galois conjugate in the interior of the unit disk.
 In order that \(f\) satisfies hypothesis \ref{h1}, we conclude that \(f\) has four simple dominant roots and a single non-dominant root.
 Since \(f\) has degree \(d=5\) and \(f(0)=\pm 1\), all the dominant roots of \(f\) are (Galois) conjugate to the single non-dominant root, \(f\) is irreducible.

Let \(\alpha\) be a root of \(f\).
Then \(\alpha\) is an algebraic integer of degree \(5\), a unit, and lies with all its Galois conjugates on two circles.
We apply \autoref{thm:bicycle} to the quintic \(f\) and find that  \(5\) is either even, or a multiple of \(3\), a contradiction. 
\end{proof}

\begin{remark}
The application of \autoref{thm:bicycle} in the proof of \autoref{prop:unit5} is excessive (even if the derived contradiction is rather satisfying).
By comparison, the approach in Lipton et al.~\cite{lipton2021skolem} is direct.
We reproduce the final part of those authors' proof below as it gives a gentle introduction to some of the techniques we apply in \autoref{prop:unit6} and \autoref{prop:unit7}.

\begin{proof}[Proof of \autoref{prop:unit5} (cf.~\cite{lipton2021skolem})]
Let \(\alpha, \oalpha, \beta, \obeta\) be the four dominant roots of \(f\) and \(\rho\) the non-dominant root of \(f\).
Since \(f\) is irreducible, the Galois group \(G\) of \(f\) acts transitively on the roots of \(f\).
Thus there exists a \(\sigma\in G\) such that \(\sigma(\alpha)=\rho\).
The element \(\sigma\) must preserve the equality \(\alpha\oalpha = \beta\obeta\) and so we have \(\rho \sigma(\oalpha) = \sigma(\beta)\sigma(\obeta)\).
We derive a contradiction: \(|\rho| |\sigma(\oalpha)| \neq |\sigma(\beta)| |\sigma(\obeta)|\) since the two roots \(\sigma(\beta)\) and \(\sigma(\obeta)\) on the right-hand side are necessarily dominant.
\end{proof}
\end{remark}

\begin{proposition} \label{prop:unit6}
	No monic polynomial \(f\in\Z[x]\) of degree six with constant term \(\pm 1\) satisfies hypotheses \ref{h1} and \ref{h2}.
\end{proposition}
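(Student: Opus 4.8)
The plan is to assume, for a contradiction, that such an $f$ exists and then to force it into the shape demanded by \autoref{thm:bicycle}, at which point both admissible configurations will be eliminated by \ref{h1} and \ref{h2}. As in \autoref{prop:unit5}, \autoref{thm:kronecker} places the dominant roots on a circle $|z|=R$ with $R>1$ (otherwise every root is a root of unity, violating \ref{h2}), and since $f(0)=\pm1$ every root is a unit. The product of all six roots has modulus $1$, so they cannot all lie on $|z|=R$; with \autoref{lem:inunitdisk} and \ref{h1} this pins down how many distinct dominant roots occur. The first and most laborious task is to reduce to the case that $f$ is \emph{irreducible} with exactly four simple dominant roots and two further roots. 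I would rule out five or six dominant roots by a modulus count (six would force $R^6=1$) and by \autoref{thm:bicycle} (a one-versus-five split of the roots across two circles matches neither admissible configuration), discard repeated roots by the same bookkeeping, and then exclude every nontrivial factorisation: a quintic factor is impossible by \autoref{prop:unit5}, a quartic factor bearing dominant roots would lie on two circles with the forbidden split three-one, and a cubic factor is ruled out by combining \ref{h2} with \autoref{thm:bicycle}. I expect this reduction to be the principal obstacle, since it is where the case analysis accumulates.

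Granting the reduction, the shape of the four dominant roots can be read off. Hypothesis \ref{h2} forbids $R$ and $-R$ from both being roots, since their quotient is $-1$; hence the four dominant roots form two complex-conjugate pairs $\alpha,\oalpha,\beta,\obeta$, and we obtain the multiplicative relation
\[
	\alpha\oalpha = \beta\obeta = R^2 .
\]
The two remaining roots are permuted by complex conjugation, so they are either a conjugate pair (of equal modulus) or a pair of real numbers; in the latter case \ref{h2} prevents their being negatives of one another, so their moduli $r_1\neq r_2$ are distinct and strictly less than $R$.

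To dispose of the real case I would argue as in the direct treatment of \autoref{prop:unit5}. Irreducibility makes the Galois group transitive, so there is a $\sigma$ with $\sigma(\alpha)=\rho_1$ for one of the real roots $\rho_1$; applying $\sigma$ to the relation above yields $\rho_1\,\sigma(\oalpha)=\sigma(\beta)\,\sigma(\obeta)$, and I would compare moduli. Whichever root $\sigma$ sends to the second real root $\rho_2$, exactly one of three things occurs: every one of $\sigma(\oalpha),\sigma(\beta),\sigma(\obeta)$ is dominant, forcing $r_1R=R^2$; or $\sigma(\oalpha)=\rho_2$, forcing $r_1r_2=R^2$; or one of $\sigma(\beta),\sigma(\obeta)$ equals $\rho_2$, forcing $r_1=r_2$. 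Each conclusion is absurd because $r_1,r_2<R$ and $r_1\neq r_2$. Hence the two sub-dominant roots form a conjugate pair, and all six roots lie on the two circles $|z|=R$ and $|z|=R^{-2}$.

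Finally I would invoke \autoref{thm:bicycle} for the unit $\alpha$, which has degree $6$ and all of its conjugates on these two circles. The four-to-two distribution rules out the second configuration, which requires a three-three split and hence only three dominant roots, contrary to \ref{h1}; so the first configuration holds and some power $\alpha^n$ is a real, non-totally-real \emph{cubic} unit, whence $[\Q(\alpha^n):\Q]=3$. The $n$-th power map $\gamma\mapsto\gamma^n$ carries the six conjugates of $\alpha$ onto the three conjugates of $\alpha^n$ in a Galois-equivariant way, and its fibres are cosets of one point-stabiliser inside another, so each fibre has size two. Thus two distinct roots $\gamma_i\neq\gamma_j$ of $f$ satisfy $\gamma_i^n=\gamma_j^n$, their quotient $\gamma_i/\gamma_j$ is a nontrivial root of unity, and this contradicts \ref{h2}, completing the proof.
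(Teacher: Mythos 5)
Your overall strategy is the same as the paper's: force $f$ to be irreducible with four simple dominant roots (two complex-conjugate pairs) and two sub-dominant roots, then collide the resulting four--two circle configuration with \autoref{thm:bicycle} and \ref{h2}. Two of your ingredients are correct and worth affirming. The transitivity argument disposing of two \emph{real} sub-dominant roots (comparing $r_1R$, $r_1r_2$, $r_1=r_2$ against $R^2$) is essentially the paper's own modulus comparison. And your closing pigeonhole argument --- the $n$-th power map sends the six conjugates of $\alpha$ onto the three conjugates of a cubic number, so two distinct roots of $f$ share an $n$-th power and their ratio is a root of unity --- is a valid variant of the paper's ending, which instead uses that the power of the \emph{inner-circle} root $\gamma$ is real, whence $\gamma^m=\overline{\gamma}^m$. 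Note that \autoref{thm:bicycle} asserts realness only for the power of the root on $|z|=r$, so your claim that $\alpha^n$ is real for a dominant $\alpha$ is unjustified; this is harmless, since you only use $[\Q(\alpha^n):\Q]=3$, which does pass to all conjugates.

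The genuine problems sit in the irreducibility reduction, which you rightly flag as the main labour but justify incorrectly. First, the quartic case: a quartic factor of $f$ can never exhibit a ``three--one split'', because integer factors are closed under complex conjugation and the dominant roots form two conjugate pairs; the only possible quartics are $\{\alpha,\overline{\alpha},\beta,\overline{\beta}\}$ (one circle) or one dominant pair together with both sub-dominant roots. Both die by a constant-term count --- every factor of $f$ has constant term $\pm1$, yet $\alpha\overline{\alpha}\beta\overline{\beta}=R^4$, and in the second case the complementary quadratic has constant term $\pm\beta\overline{\beta}=\pm R^2$ --- not by \autoref{thm:bicycle}. Second, and more seriously, the cubic case: \autoref{thm:bicycle} does \emph{not} rule out a cubic factor. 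A cubic $\{\alpha,\overline{\alpha},\rho\}$ with $\rho$ real of modulus $R^{-2}$ is exactly configuration (1) of the theorem with $d=3$, and such cubic units exist (e.g.\ $x^3+x^2-1$). What actually kills a $3+3$ factorisation is the pairing: both cubics have constant term $\pm 1$, so both real roots have modulus $R^{-2}$; being distinct reals of equal modulus they are negatives of one another, and their ratio $-1$ violates \ref{h2}. (Your five-dominant-root exclusion has a similar ordering wobble, since applying \autoref{thm:bicycle} there presupposes irreducibility, though in that case irreducibility is quick: every dominant root must be conjugate to the unique root inside the unit disk.) Third, you never address factorisations all of whose factors have degree at most two, e.g.\ three quadratics, though these die by the same constant-term count. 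For contrast, the paper sidesteps this entire case analysis: it runs the Galois-equivariance argument on $\alpha\overline{\alpha}=\beta\overline{\beta}$ \emph{before} knowing irreducibility, using \autoref{lem:inunitdisk} (a conjugate of $\alpha$ lies in the open unit disk) rather than transitivity, concludes the sub-dominant roots are a conjugate pair $\gamma,\overline{\gamma}$, and then irreducibility is immediate because the only available conjugates inside the unit disk for each dominant root are $\gamma$ and $\overline{\gamma}$, so all six roots share one minimal polynomial.
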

\begin{proof}
	Assume, for a contradiction, that such an \(f\) exists.
	As in the proof of \autoref{prop:unit5},
	\(f\) has at least four simple dominant roots \(\alpha, \oalpha, \beta, \obeta\) that lie on a circle \(\{z\in\C : |z|=R\}\) for some \(R>1\) as complex-conjugate pairs.
	We note that the group \(G\) of automorphisms of the splitting field of \(f\) must preserve the equality \(\alpha\oalpha = \beta\obeta\).
	Because \(\alpha\) is a unit on \(|z|=R\), by \autoref{lem:inunitdisk} there is both a root \(\gamma\) of \(f\) that lies in the unit disk and a permutation \(\sigma\in G\) such that \(\sigma(\alpha) = \gamma\).
	Now consider  \(\sigma(\alpha)\sigma(\oalpha) = \sigma(\beta)\sigma(\obeta)\).
	It is straightforward to elicit a contradiction that breaks this equality if either  \(\sigma(\oalpha)\) is non-dominant, or both \(\sigma(\beta)\) and \(\sigma(\obeta)\) are dominant.
	Thus we can assume that \(f\) has two non-dominant roots and further that they are of equal modulus.
	Clearly these two roots \(\gamma, \ogamma\) are a complex-conjugate pair by \ref{h2}.
	Combining these observations of the roots of \(f\), we quickly deduce that \(f\) is necessarily irreducible.

	Because the roots of the irreducible polynomial \(f\in\Z[x]\) lie on two concentric circles, we can apply \autoref{thm:bicycle}.
	Thus there is a non-dominant root, \(\gamma\) say, and \(m\in\N\) such that \(\gamma^m\) is a real cubic unit.
	It follows that \(\gamma^m / \ogamma^m =1\) and so one of the conjugate ratios of \(f\) is a root of unity, which contradicts hypothesis \ref{h2}.
\end{proof}

\begin{proposition} \label{prop:unit7}
	No monic polynomial \(f\in\Z[x]\) of degree seven with constant term \(\pm 1\) satisfies hypotheses \ref{h1} and \ref{h2}.
\end{proposition}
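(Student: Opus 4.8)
The plan is to reduce to the irreducible case, where \autoref{thm:bicycle} gives an immediate contradiction because $7$ is neither even nor a multiple of three, and then to dispatch every reducible configuration either by invoking the lower-order Propositions~\ref{prop:unit5} and~\ref{prop:unit6} or by applying \autoref{thm:bicycle} to a proper factor. As in the previous proofs I would assume for contradiction that such an $f$ exists; by \autoref{thm:kronecker} its dominant roots lie on a circle $|z|=R$ with $R>1$, and each dominant root, being a unit, has a Galois conjugate strictly inside the unit disc by \autoref{lem:inunitdisk}. Writing $n_d$ for the number of dominant roots, \ref{h1} gives $n_d\ge 4$, the existence of an interior conjugate rules out $n_d=7$, and two real dominant roots $\pm R$ would have ratio $-1$, so at most one dominant root is real.

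If $f$ is irreducible I apply \autoref{thm:bicycle} to any root: degree $7$ is of neither form $2m$ nor $3m$, a contradiction. So I may assume $f$ is reducible and argue by cases on $n_d$. Since only the $7-n_d$ non-dominant roots can supply the interior conjugates demanded by \autoref{lem:inunitdisk}, and each irreducible factor carrying a dominant root needs at least one such interior root, the number of dominant factors is at most $7-n_d$. For $n_d=6$ this number is one, so all seven roots lie in a single factor and $f$ is irreducible, contrary to assumption. For $n_d=5$ there are at most two dominant factors, and grouping the dominant pieces $\{\alpha,\oalpha\}$, $\{\beta,\obeta\}$, $\{\rho\}$ (where $\rho=\pm R$ is the unique real dominant root) forces one of three outcomes: a sextic factor (contradicting \autoref{prop:unit6} through \ref{h1}), a quintic factor with four dominant roots (contradicting \autoref{prop:unit5}), or a quartic factor whose three dominant and one interior root occupy two circles in a $3$--$1$ split, matching neither conclusion of \autoref{thm:bicycle}.

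The heart of the argument is $n_d=4$, with dominant roots $\alpha,\oalpha,\beta,\obeta$ obeying $\alpha\oalpha=\beta\obeta=R^2$. If these four lie in one irreducible factor $g$, then $g$ also contains an interior root and so has degree $5$, $6$, or $7$; the first two contradict \autoref{prop:unit5} and \autoref{prop:unit6}, and the third returns us to the already-settled irreducible case. Otherwise $\{\alpha,\oalpha\}$ and $\{\beta,\obeta\}$ lie in distinct factors $g_\alpha,g_\beta$, each of degree at least three, so the factorisation type is $(3,3,1)$ or $(3,4)$. In type $(3,3,1)$ each cubic has a real interior root, and \autoref{thm:bicycle} pins both to modulus $R^{-2}$; as the cubics are coprime these roots are the distinct values $R^{-2}$ and $-R^{-2}$, whose ratio $-1$ violates \ref{h2}.

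For the remaining type $(3,4)$ I would run a Galois-theoretic modulus calculation. Applying \autoref{thm:bicycle} to $g_\alpha$ places its real interior root at modulus $R^{-2}$; choosing $\sigma$ in the Galois group of the splitting field of $f$ with $\sigma(\alpha)$ equal to that interior root gives $|\sigma(\alpha)\sigma(\oalpha)|=R^{-2}\cdot R=R^{-1}$, since $\sigma(\oalpha)$ stays dominant. Transporting $\alpha\oalpha=\beta\obeta$ by $\sigma$ yields $|\sigma(\beta)\sigma(\obeta)|=R^{-1}$, an identity between moduli of two distinct roots of the quartic $g_\beta$, whose two non-dominant roots have modulus-product $R^{-2}$. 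I then split on their shape: if they form a complex-conjugate pair they both have modulus $R^{-1}$ and no product of two distinct roots of $g_\beta$ has modulus $R^{-1}$, an immediate contradiction; if they are real, then attaining $R^{-1}$ forces one of them to have modulus exactly $1$, hence to equal $\pm1$, contradicting the irreducibility of $g_\beta$. I expect this type $(3,4)$ to be the main obstacle, as it is the sole configuration evading both the lower-order propositions and any single-factor use of \autoref{thm:bicycle}, and its resolution hinges on coupling the rigidity of the cubic factor to the modulus bookkeeping on the quartic.
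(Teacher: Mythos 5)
Your reducible case analysis is sound, but the irreducible case---which is where the real difficulty lies---contains a genuine gap. You claim that for irreducible \(f\) you can ``apply \autoref{thm:bicycle} to any root'' and conclude from \(7\neq 2m,3m\). However, \autoref{thm:bicycle} applies only when the root lies \emph{with all its Galois conjugates on two circles, but not just one}. An irreducible septic satisfying \ref{h1} and \ref{h2} need not have this property: a priori its roots can occupy three concentric circles (four dominant roots on \(|z|=R\), a complex-conjugate pair \(\gamma,\ogamma\) on \(|z|=|\gamma|\), and a real root \(\delta\) with \(|\delta|\notin\{|\gamma|,R\}\)), or even four circles if the three non-dominant roots were all real with pairwise distinct moduli. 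For these configurations \autoref{thm:bicycle} is silent, and your proposal offers no argument at all. This is exactly the configuration the paper's proof is built around: it first observes that \autoref{thm:bicycle} disposes of the two-circle sub-cases \(|\gamma|=|\delta|\) and \(|\alpha|=|\delta|\) (since \(7\) is neither even nor a multiple of \(3\)), and then handles the remaining genuinely three-circle situation by transitivity of the Galois group---choosing \(\sigma\) with \(\sigma(\alpha)=\delta\), transporting the identity \(\alpha\oalpha=\beta\obeta\), and checking that every possible pairing of \(|\delta\sigma(\oalpha)|\in\{|\delta||\gamma|,\,|\delta|R\}\) against \(|\sigma(\beta)\sigma(\obeta)|\in\{R^2,\,|\gamma|^2,\,|\gamma|R\}\) breaks the modulus equation.

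The irony is that you already possess the needed tool: your type-\((3,4)\) argument is precisely this Galois modulus transport, and a version of it (together with the structural deduction that the non-dominant roots of an irreducible candidate consist of one real root and one complex-conjugate pair) is what closes the irreducible case. Note also a difference in architecture: the paper never assumes reducibility. It deduces, mutatis mutandis from the methods of \autoref{prop:unit6}, that any candidate septic must be irreducible, and then runs the modulus argument once; your proposal instead dispatches the reducible configurations explicitly (correctly, using \autoref{prop:unit5}, \autoref{prop:unit6}, and \autoref{thm:bicycle} on proper factors) but leaves unproved the one case that cannot be delegated to lower orders or to a two-circle theorem.
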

\begin{proof}
	Assume, for a contradiction, that such an \(f\) exists.
	As in the proof of \autoref{prop:unit5},
	\(f\) has at least four simple dominant roots \(\alpha, \oalpha, \beta, \obeta\) that lie on a circle \(\{z\in\C : |z|=R\}\) for some \(R>1\) as complex-conjugate pairs.
	Mutatis mutandis, one can use the methods in the proof of \autoref{prop:unit6} to make the following two deductions.
	First, \(f\) is irreducible.
	Second, \(f\) has precisely one real root \(\delta\) and another complex-conjugate pair of roots \(\gamma, \ogamma\).
	Additionally, we have that \(\delta, \gamma, \ogamma\) are all non-dominant roots of \(f\) such that \(|\gamma|\neq |\delta|\).
	We note that if one supposes, for a contradiction, that the septic polynomial \(f\) has  \(|\gamma|= |\delta|\) or \(|\alpha|=|\delta|\), then, by \autoref{thm:bicycle}, it follows that \(7\) is either even, or a multiple of \(3\).

Because \(f\) is irreducible, the Galois group of \(f\) acts transitively on the roots of \(f\).
Thus there is a permutation \(\sigma\) in the Galois group of \(f\) such that \(\sigma(\alpha)=\delta\).
We know that \(\sigma\) must preserve the equality \(\alpha\oalpha = \beta\obeta\) and so we have \(|\delta\sigma(\oalpha)|=|\sigma(\beta)\sigma(\obeta)|\).
The left-hand side is equal to one of \(|\delta||\gamma|\) or \(|\delta|R\) depending on \(\sigma(\oalpha)\).
There are three cases to consider for the right-hand side.
The roots \(\sigma(\beta)\) and \(\sigma(\obeta)\) are either both dominant, both non-dominant, or one of each.
It is clear that the modulus in each of these (respective) cases \(R^2\), \(|\gamma|^2\), and \(|\gamma|R\) breaks the aforementioned equality between the left- and right-hand sides, a contradiction.
\end{proof}
Hence we have proved \autoref{thm:threeroots}, as required.

\section{Palindromic Octics} \label{sec:octics}

A monic polynomial \(f\in\Z[x]\) is \emph{palindromic} if its coefficients string together to form a palindrome.  That is to say, 	for \(f(x) = x^d + a_{d-1} x^{d-1} + \cdots + a_1 x + 1\) we have that \(a_k = a_{d-k}\) for each \(k\in\{0,1,\ldots, d\}\).
In other sources, the term \emph{self-reciprocal} is sometimes used since if \(\alpha\) is a root of \(f\) then \(\alpha^{-1}\) is also a root of \(f\).
If \(\alpha\in\C\) is a root of \(f\), then so are \(\bar{\alpha}, 1/\alpha\), and \(1/\bar{\alpha}\).
Further, when \(\alpha\) is neither real nor lies on the unit circle then these four roots are distinct.
The class of palindromic polynomials appear in areas across mathematics and computer science in fields such as: coding theory, algebraic curves over finite fields, knot theory, and linear feedback shift registers, to name but a few (see the survey \cite{joyner2013zeros}).

\begin{remark}
Let us further motivate the study of the \emph{Palindromic Skolem Problem} (the Skolem Problem restricted to the study of sequences with palindromic characteristic polynomials).
From the viewpoint of dynamical systems, we observe that the equations of motion \eqref{eq:rec} governing a recurrence sequence with palindromic characteristic polynomial (hereafter a \emph{palindromic recurrence sequence}) possess a \emph{time-reversing symmetry}; that is to say, the recurrence relation is invariant under the time reversal map \(n \mapsto -n\).
More concretely, let \(\seq[\infty]{X_n}{n=0}\) and \(\seq[\infty]{Y_{-n}}{n=0}\) be recurrence sequences satisfying the palindromic relations
\begin{align*}
	X_{n+d} &= a_{d-1} X_{n+d-1} + \cdots + a_{d-1} X_{n+1} + X_n \qquad \text{and} \\
	Y_{-n-d} &= a_{d-1} Y_{-n-d+1} + \cdots + a_{d-1} Y_{-n-1} + Y_{-n},
\end{align*}
respectively.
Then \(Y_{-n} = X_n\) for each \(n\in\N_0\) if \(Y_{-m} = X_m\) for each \(m\in\{0,1,\ldots, d-1\}\).

For further information on the topic of time-reversing symmetries, we refer the interested reader to the exposition in Lamb's article \cite{lamb1992reversing} and the recent survey by Baake \cite{baake2018reversing} (and the references therein).
\end{remark}

\subsection{Hard instances of the Reversible Skolem Problem at order eight}

In \autoref{cor:infinitefamily}, we construct an infinite family of palindromic octics that satisfy both \ref{h1} and \ref{h2}.
This result blocks any obvious attempt to settle decidability of the Reversible Skolem Problem at order eight with current state-of-the-art techniques.
Another family of octics that satisfy both \ref{h1} and \ref{h2} is discussed in Lipton et al.~\cite{lipton2021skolem}.
The additional restriction (palindromic coefficients) we require demonstrates a refinement in the discussion of hard instances of the Reversible Skolem Problem at order eight.

	\begin{proposition} \label{cor:infinitefamily}
	 There are infinitely many palindromic octics in \(\Z[x]\) that satisfy both \ref{h1} and \ref{h2}.
	\end{proposition}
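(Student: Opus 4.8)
The plan is to exhibit an explicit infinite family of integer palindromic octics and to verify hypotheses \ref{h1} and \ref{h2} for all but finitely many of its members. Because a palindromic \(f\) has \(\alpha^{-1}\) as a root whenever \(\alpha\) is, it is natural to pass, via the Joukowski substitution \(y = x + x^{-1}\), to the quartic \(g\) determined by \(f(x) = x^4 g(x + x^{-1})\). Under this substitution both circles \(|z| = R\) and \(|z| = R^{-1}\) map onto the \emph{same} ellipse with foci \(\pm 2\) and semi-axes \(R + R^{-1}\) and \(R - R^{-1}\). I would first record the resulting dictionary: the roots of \(f\) lie on the two concentric circles \(|z| = R\) and \(|z| = R^{-1}\) with \(R > 1\) exactly when the four roots of \(g\) split into two complex-conjugate pairs lying on a common ellipse with foci \(\pm 2\); in that case the dominant roots of \(f\) are \(R e^{\pm i\theta_1}, R e^{\pm i\theta_2}\), where the \(\theta_j\) index the two conjugate pairs of \(g\). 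Thus \ref{h1} holds as soon as \(g\) has two distinct non-real complex-conjugate pairs of roots on such an ellipse.

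To build the family I would work at the level of \(g\). A complex-conjugate pair of roots of \(g\) coming from an argument \(\theta_j\) contributes the real quadratic factor \(y^2 - M t_j\, y + (M^2 + t_j^2 - 4)\), where \(M = R + R^{-1}\) and \(t_j = 2\cos\theta_j \in (-2,2)\); multiplying the two such factors expresses the coefficients of \(g\), and hence of \(f(x) = x^4 g(x+x^{-1})\), as explicit symmetric functions of \(M, t_1, t_2\). Choosing these parameters along a suitable one-parameter integral family yields infinitely many integer palindromic octics \(f_k\), of which the two displayed polynomials are the members \(k = 1\) and \(k = 2\). Verifying \ref{h1} then amounts to checking, identically in the parameter, that the origin-centred axis-aligned ellipse through the roots of \(g_k\) has foci \(\pm 2\) (equivalently that its semi-axes \(A, B\) satisfy \(A^2 - B^2 = 4\)) and that \(t_1 \neq t_2\) with \(t_1, t_2 \in (-2,2)\); together with \autoref{thm:kronecker}, which forbids all roots lying on the unit circle, this gives four distinct dominant roots \(R e^{\pm i\theta_1}, R e^{\pm i\theta_2}\) with \(R > 1\).

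The substance, and the main obstacle, is \ref{h2}. Cross-circle quotients of roots have modulus \(R^{\pm 2} \neq 1\), so the only quotients that can be roots of unity are the unit-modulus ones \(e^{\pm 2 i\theta_1}, e^{\pm 2 i\theta_2}, e^{\pm i(\theta_1 \pm \theta_2)}\); hence \ref{h2} is equivalent to none of \(2\theta_1, 2\theta_2, \theta_1 \pm \theta_2\) being a rational multiple of \(\pi\). I would establish this for all but finitely many members as follows. If some quotient of two roots of \(f_k\) is a root of unity of order \(N\), then two distinct roots \(\lambda_i, \lambda_j\) satisfy \(\lambda_i^N = \lambda_j^N\), so the polynomial \((f_k)_N(x) := \prod_i (x - \lambda_i^N)\) has a repeated root; by \autoref{lem:algintegers} its coefficients are integers that depend polynomially on \(k\), so \(Q_N(k) := \operatorname{disc}\big((f_k)_N\big)\) is an integer polynomial in \(k\) that must vanish. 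The order \(N\) of any such root of unity is bounded by a constant depending only on the degree eight, since the quotient lies in the splitting field of \(f_k\)---of degree at most \(8!\)---and \(\varphi(N) \le 8!\) holds for only finitely many \(N\). For each relevant \(N\) the polynomial \(Q_N\) is not identically zero, because the first member of the family has eight distinct roots no quotient of which is a root of unity, forcing \(Q_N(1) \neq 0\). Consequently each \(Q_N\) has finitely many integer zeros, and discarding this finite union over the finitely many admissible \(N\) leaves infinitely many members satisfying \ref{h2}, and hence both hypotheses.

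The crux is the uniform treatment of \ref{h2}: one must confirm that the base member is genuinely non-degenerate---so that every \(Q_N\) is a nontrivial polynomial---and that the bound on \(N\) is explicit enough to list the finitely many \(Q_N\) whose roots are to be excluded. By comparison, once the Joukowski dictionary is set up, \ref{h1} is a finite computation checking that the family lies on the confocal pencil of ellipses with foci \(\pm 2\) and that the two conjugate pairs stay distinct and non-real. It is worth noting that these octics resist current methods precisely because, being irreducible with roots on two circles, they fall into the even-degree (Salem half-norm) case of \autoref{thm:bicycle}, which---unlike the cubic case exploited for degrees five through seven---imposes no root-of-unity quotient.
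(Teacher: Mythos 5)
Your reduction of \ref{h2} to the non-vanishing of the integer polynomials \(Q_N(k)=\disc\bigl((f_k)_N\bigr)\) for the finitely many admissible orders \(N\) is sound, but it is applied to a family whose existence you never establish, and that is where the proposal breaks. Anchoring the family so that ``the two displayed polynomials are the members \(k=1\) and \(k=2\)'' pins down the linear interpolation \(f_k(x)=x^8+x^7+(1-2k)x^6+x^5+(4k+1)x^4+x^3+(1-2k)x^2+x+1\), and for this family hypothesis \ref{h1} fails for every \(k\ge 3\); it is not a ``finite computation \ldots identically in the parameter''. Concretely, your own Joukowski dictionary gives \(g_k(y)=y^4+y^3-(2k+3)y^2-2y+(8k+1)\), and \(g_3(-2)=1>0\) while \(g_3(-2.5)=-2.8125<0\), so \(g_3\) has real roots of modulus greater than \(2\); hence \(f_3=x^8+x^7-5x^6+x^5+13x^4+x^3-5x^2+x+1\) has real roots, and one checks it has a \emph{single} dominant root (real, near \(-2.4\)), so \ref{h1} fails. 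The failure is structural, not an accident of \(k=3\): matching coefficients in your own parametrisation forces \(M(t_1+t_2)=-1\) and \(t_1t_2=2-M^2\), so \(t_1,t_2\in(-2,2)\) requires \(M^2<6\), which combined with \((M^2-3)^2-1/M^2=2k\) caps \(k\le 4\); in fact only \(k=1,2\) survive. This is what one should expect: the two-circle (confocal-ellipse) condition is a codimension-one condition on the four free coefficients of a palindromic octic, and a straight line through two points of that hypersurface has no reason to remain on it. So the step you treat as routine is precisely the content of the proposition, and the step you treat as the crux (\ref{h2}) is attached to a family with only finitely many admissible members.

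The paper sidesteps this trap by deforming the \emph{roots} rather than the coefficients. Starting from a single verified example \(f(x)=\prod_{i=1}^8(x-\lambda_i)\), it sets \(f_n(x):=\prod_{i=1}^8(x-\lambda_i^n)\): powering preserves palindromicity and the two-circle configuration exactly (so \ref{h1} is automatic), preserves \ref{h2} because \((\lambda_i/\lambda_j)^n\) being a root of unity forces \(\lambda_i/\lambda_j\) to be one, and keeps the coefficients in \(\Z\) by \autoref{lem:algintegers}; infinitude of distinct members then follows from a pigeonhole/non-degeneracy (Skolem--Mahler--Lech) argument applied to the trace sequence \(a_{7,n}=\sum_i\lambda_i^n\). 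To salvage your route you would have to exhibit an integral family lying identically on the confocal pencil---which is the real difficulty---whereas over the power family your discriminant machinery becomes unnecessary, since \ref{h2} is inherited directly.
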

	\begin{proof}	
	Let \(f\in\Z[x]\) be such a palindromic octic with four simple dominant roots (that is, two complex-conjugate pairs) that lie on a circle of radius \(r>1\).
	Write \(f(x) = \prod_{i=1}^8 (x-\lambda_i)\) and for each \(n\in\N\), define \(f_n(x) := \prod_{i=1}^8 (x-\lambda_i^n)\).
	By the symmetries in the roots \(\lambda_i^n\), it is straightforward to verify the following three observations.
	First, each octic \(f_n\) in the sequence \(\seq[\infty]{f_n}{n=1}\) is palindromic.
	Second,
	each octic \(f_n\) satisfies both \ref{h1} and \ref{h2}.
	Third, by \autoref{lem:algintegers}, \(f_n\in\Z[x]\) for each \(n\in\N\).

	We finish the proof by showing that  there are infinitely many distinct polynomials in the sequence \(\seq[\infty]{f_n}{n=1}\).
	Let \(r\eu^{\pm\iu \theta}, r\eu^{\pm\iu \psi}\) be the four dominant roots of \(f\).
	Let \(f_n(x) = x^8 + a_{7,n} x^7 + \cdots + 1\).
	Then, by Vieta's formulae, \(a_{7,n}\) is given by the sum of the \(n\)th powers of the roots of \(f\); that is,
		\begin{equation*}
			a_{7,n} = (r^n + r^{-n})(\eu^{n\iu\theta} + \eu^{-n\iu\theta} + \eu^{n\iu\psi} + \eu^{-n\iu\psi}).
		\end{equation*} 
	Assume, for a contradiction, that the non-degenerate integer linear recurrence sequence \(\seq{a_{7,n}}{n}\) takes only finitely many values.
	By the Pigeonhole Principle, there is an \(a\in\Z\) and an infinite subsequence \(\seq{n_k}{k}\) of natural numbers such that  \(a_{7,n_k}= a\) for each \(k\in\N\).
	We make two observations.
	First, the integer linear recurrence sequence \(\seq{a_{7,n} - a}{n}\) has characteristic polynomial \(f(x)(x-1)\) (see \cite[Theorem 1.1]{everest2003recurrence}) and so is non-degenerate.
	Second, the sequence \(\seq{a_{7,n} - a}{n}\) vanishes infinitely often.
	We have a contradiction: a non-degenerate linear recurrence sequence has only finitely many zero terms.
	It follows that there are infinitely many distinct palindromic octics in the sequence \(\seq{f_{n}}{n}\), as desired.
	\end{proof}

\begin{remark}
A \emph{symplectic matrix} \(M\) is a \(2\ell \times 2\ell\) matrix that preserves the \emph{symplectic form}  \(J\) in \(2\ell\) dimensions; that is to say, \(M^\top J M = J\) where
	\begin{equation*}
		J = \begin{pmatrix}
				0 & I_\ell \\ -I_\ell & 0
				\end{pmatrix}.
	\end{equation*}
It is well-known that the characteristic polynomial of a symplectic matrix is a palindromic polynomial. 
In fact, the following constructive result proves the converse.
Rivin \cite[Theorem A.1]{rivin2008walks} attributes the result to Kirby \cite{kirby1969integer}\footnote{The author was unable to access Kirby's article in order to verify this attribution.}.
	\begin{theorem} \label{thm:symplectic}
	For each monic palindromic polynomial \(f\in \Z[x]\) of degree \(2\ell\), there is a symplectic matrix \(M\in \Z^{2\ell\times 2\ell}\) such that \(\det(x I_{2\ell} - M) = f(x)\).
	\end{theorem}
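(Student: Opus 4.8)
The plan is to realise $M$ as the multiplication-by-$x$ operator on the ring $R:=\Z[x]/(f)$, written in a symplectic basis. Writing $f(x)=x^{2\ell}+a_{2\ell-1}x^{2\ell-1}+\cdots+a_1x+1$, the ring $R$ is a free $\Z$-module of rank $2\ell$ with basis $1,x,\ldots,x^{2\ell-1}$, and the operator $T\colon u\mapsto xu$ has matrix equal to the companion matrix of $f$, so $\det(xI_{2\ell}-T)=f(x)$. As $f(0)=1$, the class of $x$ is a unit in $R$ (with $x^{-1}=-(x^{2\ell-1}+a_{2\ell-1}x^{2\ell-2}+\cdots+a_1)$), hence $T$ is invertible. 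Everything then reduces to equipping $R$ with a nondegenerate, alternating, \emph{unimodular} $\Z$-bilinear form $\omega$ preserved by $T$: for such an $\omega$ the operator $T$ is a symplectic automorphism of $(R,\omega)$, and since every unimodular alternating $\Z$-lattice is isometric to $(\Z^{2\ell},J)$, expressing $T$ in a symplectic basis turns it into a matrix $M\in\mathrm{Sp}(2\ell,\Z)$ with $\det(xI_{2\ell}-M)=f(x)$.

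The palindromic hypothesis is exactly what makes an invariant form available. From $f(x)=x^{2\ell}f(1/x)$ the map $x\mapsto x^{-1}$ descends to a ring involution $\iota$ of $R$, and $T$-invariance of $\omega$ amounts to the $\omega$-adjoint of $T$ being $T^{-1}$. I would therefore search for $\omega$ in the shape $\omega(u,v)=\phi\bigl(c\,u\,\iota(v)\bigr)$, where $\phi\colon R\to\Z$ is a $\Z$-linear functional and $c\in R$ is chosen with $\iota(c)=-c$. Invariance is then automatic, since $\omega(Tu,Tv)=\phi\bigl(c\,(xu)\,(x^{-1}\iota(v))\bigr)=\omega(u,v)$; and $\omega$ is alternating as soon as $\phi$ is $\iota$-invariant, because $\omega(v,u)=\phi\bigl(\iota(c\,v\,\iota(u))\bigr)=\phi\bigl(\iota(c)\,\iota(v)\,u\bigr)=-\omega(u,v)$.

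The difficulty I expect to dominate is meeting unimodularity and the $\iota$-invariance of $\phi$ at once. On one hand $R$ is Frobenius, so the functional $\phi_0$ returning the coefficient of $x^{2\ell-1}$ makes $(u,v)\mapsto\phi_0(uv)$ a perfect pairing (its Gram matrix is anti-triangular with $1$'s on the anti-diagonal, hence of determinant $\pm1$); on the other hand $\phi_0$ is not $\iota$-invariant, whereas the $\iota$-invariant trace functionals produce a Gram determinant equal to the discriminant of $f$, which is only exceptionally $\pm1$. Reconciling the two---equivalently, producing an anti-invariant twist that restores unimodularity---is the heart of the matter. To sidestep it I would decompose $f$ into palindromic atoms and realise each by its own symplectic block, the block-diagonal sum remaining symplectic with the correct characteristic polynomial. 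Reciprocal pairs $h(x)h^{*}(x)$ are handled explicitly by $\diag{C_h,C_h^{-\top}}$, which is symplectic because each irreducible factor of $f$ inherits constant term $\pm1$, so that $C_h$ is unimodular. The residual hard case is a self-reciprocal irreducible factor $h$, where the fixed subring $\Z[x+x^{-1}]$ together with the order-two involution $\iota$ supplies the relative-quadratic (CM-type) structure from which a unimodular invariant alternating form on $\Z[x]/(h)$ can be built; this last construction is the genuine obstacle.

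Assembling these blocks produces the form $\omega$ on $R$; passing to a symplectic basis then yields an integer symplectic matrix $M$ with $\det(xI_{2\ell}-M)=f(x)$, as required.
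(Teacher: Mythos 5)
Your proposal stops exactly where the theorem begins, so as it stands it is not a proof. The parts you carry out are correct but routine: since \(f\) is monic with \(f(0)=1\), every monic irreducible factor \(h\in\Z[x]\) has \(h(0)=\pm1\); non-self-reciprocal factors pair off as \(hh^{*}\) and are realised by \(\operatorname{diag}(C_h,C_h^{-\top})\); and every unimodular alternating \(\Z\)-lattice admits a symplectic basis. All the content sits in the case you defer---an irreducible self-reciprocal factor \(h\) of even degree---and there you offer only the assertion that the CM-type structure over \(\Z[x+x^{-1}]\) ``supplies'' a unimodular invariant alternating form, followed by the concession that this is ``the genuine obstacle.'' That concession is fatal: on the lattice \(\Z[x]/(h)\) every nonzero invariant alternating form is a twisted trace form \(\omega(u,v)=\operatorname{Tr}_{K/\Q}(c\,u\,\iota(v))\) with \(\iota(c)=-c\), whose Gram determinant is \(\pm N_{K/\Q}(c)\disc(h)\); unimodularity therefore forces \(c\) to be an anti-invariant generator of the trace dual (inverse different) of the order \(\Z[x]/(h)\), and producing such a \(c\)---or else varying the lattice over fractional ideals and controlling a class-group obstruction---is precisely the content of the theorem in this case. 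Nothing in your outline produces it. (You would also need to handle repeated factors and the factors \(x\pm1\), which are self-reciprocal of odd degree and so cannot carry a symplectic block on their own; their multiplicities in a palindromic \(f\) are automatically even, but your atomic decomposition never says this.)

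For the record, the paper gives no proof either: the statement is imported from Rivin \cite[Theorem A.1]{rivin2008walks}, who attributes it to Kirby \cite{kirby1969integer}. The known argument supplies exactly the idea you are missing and needs no factorisation of \(f\). Since \(f\) is palindromic of degree \(2\ell\), write \(f(x)=x^{\ell}q(x+x^{-1})\) with \(q\in\Z[x]\) monic of degree \(\ell\); let \(C\) be the companion matrix of \(q\), and let \(S\) be the Gram matrix \(S_{ij}=\lambda(y^{i+j-2})\) of the Frobenius functional \(\lambda\) (coefficient of \(y^{\ell-1}\)) of \(\Z[y]/(q)\). Then \(S\) is symmetric and integral, anti-triangular with \(1\)'s on the anti-diagonal (so \(\det S=\pm1\)), and satisfies \(SC=C^{\top}S\). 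Setting
\begin{equation*}
M_0=\begin{pmatrix}0 & -I_{\ell}\\ I_{\ell} & C\end{pmatrix},\qquad
\Omega=\begin{pmatrix}0 & S\\ -S & 0\end{pmatrix},
\end{equation*}
one checks \(M_0^{\top}\Omega M_0=\Omega\) (using \(SC=C^{\top}S\)) and \(\det(xI_{2\ell}-M_0)=\det\bigl((x^{2}+1)I_{\ell}-xC\bigr)=x^{\ell}q(x+x^{-1})=f(x)\); your own normal-form step then conjugates \((M_0,\Omega)\) to a matrix in the standard symplectic group. In your module-theoretic language, the repair is to exploit that \(\Z[x]/(f)\) is free of rank two over the fixed ring \(\Z[x+x^{-1}]\cong\Z[y]/(q)\) and to build the form from the Frobenius functional of that \emph{subring}, for instance \(\omega(u,v)=\lambda\bigl((u\,\iota(v)-\iota(u)\,v)/(x-x^{-1})\bigr)\), which is integral because anti-invariant Laurent polynomials are divisible by \(x-x^{-1}\) in \(\Z[x^{\pm1}]\). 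This replaces the trace forms---whose determinants inevitably see the discriminant---by a pairing whose Gram matrix is unimodular by inspection, and it works uniformly for all palindromic \(f\), with no case analysis.
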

	Note the group of \(2\ell \times 2\ell\) symplectic matrices with entries in \(\Z\) is closed under multiplication.
	This observation leads to an alternative proof of \autoref{cor:infinitefamily} as follows.
	Let \(f\in\Z[x]\) be an octic palindrome satisfying \ref{h1} and \ref{h2}.
	Now let  \(M\in \Z^{8\times 8}\) be a symplectic matrix with characteristic polynomial \(f\).
	Consider the sequence \(\seq{f_n}{n}\) where \(f_n(x) := \det(xI_{2\ell} - M^n)\in\Z[x]\).
	As before, we need to verify that each polynomial in this sequence satisfies the root assumptions  \ref{h1} and \ref{h2}.
	We then proceed in a similar fashion to the proof of \autoref{cor:infinitefamily}, in order to generate an infinite sequence of octic palindromes with the desired properties.
\end{remark}

\subsection{Galois theory of octic palindromes} \label{ssec:galois}

It is interesting to consider the root symmetries of the characteristic polynomials of reversible linear recurrence sequences.
In this subsection we focus our attention on the Galois groups of octic palindromes; in particular, those underlying hard instances of the Reversible Skolem Problem.
We present a new result (\autoref{thm:keilthy}).
The problem of root symmetries follows naturally from our approach to \autoref{thm:main} where we explored polynomial identities between roots of certain irreducible polynomials.
Such results are also motivated by hard open problems such as (polynomial) invariant generation and loop synthesis in the field of program verification.
The task of computing the Galois groups of irreducible polynomials in \(\Z[x]\) is well-known.
Families of polynomials associated with reversible sequences include the cyclotomic and Salem polynomials.
The authors of \cite{christopoulos2010galois} discuss ramifications to the Galois group of moving two of the Galois conjugates off of the unit circle.
For the interested reader, two accounts of the Galois theory of palindromic polynomials are \cite{viana2002galois, rivin2015large}. 
\lq{Generically\rq,} the Galois group of a palindromic polynomial of degree \(2d\) is \(S_d \wr C_2\) (the \emph{signed permutation group} or 
the \emph{hyperoctahedral group}).
In the case \(d=4\), the order of \(S_4 \wr C_2\) is 384.

Here we shall consider the Galois groups associated with irreducible octic palindromes of the form constructed in \autoref{cor:infinitefamily}.
Recall that each polynomial in that family has roots of the form \(r^{\pm 1}\eu^{\pm\iu \theta}, r^{\pm 1}\eu^{\pm\iu \psi}\).
When a polynomial with this root distribution is irreducible, some of the powers of these roots are given by Salem half-norms (\autoref{thm:bicycle}).
\begin{theorem} \label{thm:keilthy}
	Suppose that \(f\in\Z[x]\) is an irreducible and simple palindromic octic with four dominant roots such that none of its conjugate ratios are roots of unity (so that \ref{h1} and \ref{h2} are satisfied).
	Then, the associated Galois group is isomorphic to either \(S_4\times C_2\) or \(A_4\times C_2\).
\end{theorem}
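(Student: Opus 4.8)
The plan is to reduce the octic $f$ to its degree‑four ``reciprocal resolvent'' and to read the Galois group $G$ off the resulting quartic. Write the eight simple roots as the dominant pairs $\alpha,\oalpha$ and $\beta,\obeta$ on the circle $|z|=r$ with $r>1$, together with their reciprocals on $|z|=r^{-1}$; palindromicity groups the roots into the four reciprocal pairs $\{\alpha,\alpha^{-1}\}$, $\{\oalpha,\oalpha^{-1}\}$, $\{\beta,\beta^{-1}\}$, $\{\obeta,\obeta^{-1}\}$. Since any $\sigma\in G$ satisfies $\sigma(\lambda^{-1})=\sigma(\lambda)^{-1}$, the group $G$ permutes these four blocks and so embeds in the hyperoctahedral group $S_4\wr C_2$ of order $384$. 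First I would apply the substitution $y=x+x^{-1}$, writing $f(x)=x^4 h(x+x^{-1})$ for a monic quartic $h\in\Z[x]$ whose roots are the four block‑sums $y_\alpha=\alpha+\alpha^{-1}$, $y_{\oalpha}=\overline{y_\alpha}$, $y_\beta$, $y_{\obeta}=\overline{y_\beta}$. Because $f$ is irreducible of degree eight, $h$ is irreducible, so $H:=\mathrm{Gal}(h)$ is a transitive subgroup of $S_4$ — precisely the image of $G$ under the block action $G\to S_4$ — and complex conjugation acts on the roots of $h$ as the double transposition $(y_\alpha\,y_{\oalpha})(y_\beta\,y_{\obeta})$.

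The arithmetic heart of the argument is to force $H\in\{A_4,S_4\}$. Using \ref{h2}, the only multiplicative relations among the eight roots are the four reciprocal relations together with $\alpha\oalpha=\beta\obeta$ (the identity exploited throughout \autoref{sec:decidability}); hence $G$ lies in the stabiliser of this single extra relation inside $S_4\wr C_2$, a subgroup one computes to be isomorphic to $S_4\times C_2$, with central factor generated by the global reciprocation $\iota\colon\lambda\mapsto\lambda^{-1}$. Concretely this means every $\sigma\in G$ either preserves or interchanges the two circles compatibly with the conjugate‑pairing. Now suppose $H$ were one of the imprimitive groups $C_4$, $D_4$, or $K_4$; equivalently the resolvent cubic of $h$ is reducible and $H$ preserves one of the three pairings of the four blocks. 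I would then exhibit a quotient $\lambda/\mu$ of two dominant (hence equal‑modulus) roots, drawn one from $\{\alpha,\oalpha\}$ and one from $\{\beta,\obeta\}$, that $H$ keeps split between the two concentric circles — $\alpha/\beta$ serves for two of the pairings and $\alpha/\obeta$ for the third — and check, using $G\le S_4\times C_2$, that every Galois conjugate of $\lambda/\mu$ again has modulus one. Such a $\lambda/\mu$ is a unit, so by Kronecker's \autoref{thm:kronecker} it is a root of unity, contradicting \ref{h2}. Thus $H$ is contained in no $D_4$, and since the only transitive subgroups of $S_4$ with that property are $A_4$ and $S_4$, we conclude $H\in\{A_4,S_4\}$.

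It remains to pin down $G$ inside $S_4\times C_2$ through the kernel $N$ of $G\to H$. The intersection of the stabiliser with the base group $C_2^4$ is exactly $\langle\iota\rangle$, so $N\le\langle\iota\rangle$: the kernel is trivial or $C_2$, the element $\iota$ is central, and when $N\cong C_2$ the full preimage gives $G\cong H\times C_2$. If $H\cong A_4$ then $|G|=12\,|N|$, and transitivity of $G$ on the eight roots forces $8\mid|G|$, whence $|N|=2$ and $G\cong A_4\times C_2$. The genuinely delicate case — and the step I expect to be the main obstacle — is $H\cong S_4$: here transitivity is consistent with both $|G|=24$ and $|G|=48$, so I must rule out the diagonal copy of $S_4$ in $S_4\times C_2$ (the case $N=1$, i.e. $\iota\notin G$, equivalently the splitting fields of $f$ and $h$ coinciding), which the group theory alone does not exclude. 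I would settle this through the finer arithmetic of the two‑circle family — combining Daileda's \autoref{thm:daileda} (which, via the non‑torsion unimodular unit $\alpha/\beta$ supplied by \ref{h2}, shows the splitting field is imaginary and not a CM‑field) with the Salem half‑norm structure of \autoref{thm:bicycle}(2) — to show that reciprocation is always realised in $G$. This gives $N\cong C_2$ and $G\cong S_4\times C_2$, completing the classification.
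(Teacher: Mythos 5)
Your middle argument --- bounding the Galois group inside \(S_4\times C_2\) and then killing every imprimitive image --- is correct, and it is a genuinely different route from the paper's. Where you argue uniformly, the paper splits cases: when complex conjugation is central in the candidate group (\(C_4\), \(K_4\), and one dihedral copy) it invokes Kroneckerian/CM-field theory (\autoref{cor:schinzel} and \autoref{thm:daileda}), and for the two remaining dihedral copies it shows that \(\alpha/\beta+\overline{\alpha/\beta}\) (respectively \(\alpha/\obeta+\oalpha/\beta\)) is a rational integer of absolute value at most \(2\). Your argument instead needs only \autoref{thm:kronecker}: writing \(s_i(\pi)=-1\) exactly when block \(i\) crosses the partition \(\{1,2\}\cup\{3,4\}\) under \(\pi\), a Galois conjugate of \(x_1/x_3\) has modulus \(R^{\pm(s_1(\pi)-s_3(\pi))}\), and one checks \(s_1=s_3\) for every \(\pi\) stabilising the pairing \(\{\{1,2\},\{3,4\}\}\) or \(\{\{1,3\},\{2,4\}\}\), while \(x_1/x_4\) handles \(\{\{1,4\},\{2,3\}\}\); then the ratio is a unit with all conjugates unimodular, hence a root of unity, contradicting \ref{h2}. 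I verified the crossing-sign computation for all three pairings. A further advantage: since your argument uses only the image \(H\) of \(G\) in \(S_4\), it eliminates graph (diagonal) subgroups sitting over \(D_4\), \(C_4\), \(K_4\) at no extra cost, a case the paper never even has to mention because of how it handles the group theory (see below).

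The gap is exactly where you located it, and it is genuine: you never rule out the twisted diagonal \(\{\sigma_\pi\,\iota^{\operatorname{sgn}\pi}:\pi\in S_4\}\cong S_4\), where \(\sigma_\pi\) denotes the lift of \(\pi\), and your proposed repair is a hope rather than an argument. \autoref{thm:daileda} only certifies that the splitting field is imaginary and not a CM-field --- which you already know from the unimodular unit \(\alpha/\beta\) and \ref{h2} --- and says nothing about whether the permutation \(\lambda\mapsto\lambda^{-1}\) of the roots is induced by a field automorphism; and \autoref{thm:bicycle} concerns a power \(\alpha^n\), whose Galois-theoretic structure does not lift to the splitting field of \(f\) in any straightforward way. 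What makes this worth stressing is that your instinct that ``group theory alone does not exclude'' the diagonal is right, and it exposes a defect in the paper's own proof: the paper asserts that, because \(\langle\iota\rangle\) acts freely on the roots, every transitive subgroup of \(S_4\times C_2\) is a product \(T\times C_2\) with \(T\) transitive in \(S_4\). That assertion is false. The twisted diagonal meets \(\langle\iota\rangle\) trivially, so it is not such a product (abstractly it is \(S_4\), not \(S_4\times C_2\)), yet it acts transitively on the eight roots: the even lifts already carry \(\alpha\) to \(\alpha,\oalpha,\beta^{-1},\obeta^{-1}\), and odd transpositions composed with \(\iota\) reach the rest (the lift of \((34)\) times \(\iota\) sends \(\alpha\mapsto\alpha^{-1}\); the lift of \((13)\) times \(\iota\) sends \(\alpha\mapsto\beta\)).

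So your proposal is incomplete at precisely the point where the published proof is also unsound. Closing it requires a genuinely new arithmetic input: one must show that, under \ref{h1} and \ref{h2}, the splitting field of \(f\) cannot coincide with the splitting field of the trace quartic \(h\) (equivalently, that some automorphism inverts all the roots simultaneously), or else exhibit a counterexample, in which case the theorem itself would need amending. Neither your sketch nor the paper supplies such an argument.
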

\begin{proof}
Let \(\alpha, \oalpha, \beta, \obeta\) be the four dominant roots of \(f\).
As before, we can assume without loss of generality that these roots lie on a circle \(|z|=R\) with \(R>1\).
The Galois group of the splitting field of \(f\) is necessarily a subgroup of the group of automorphisms \(G\) of the set \(\{\alpha^{\pm 1}, \oalpha^{\pm 1}, \beta^{\pm 1}, \obeta^{\pm 1}\}\subset \C\).
Further, the Galois group necessarily contains permutations that fix the relations \(x x^{-1} =1\) for \(x\) in the set and \(\alpha  \alpha^{-1} - \beta \beta^{-1}= 0\).
We note that \(G\) also contains the \emph{inversion map} \(x\mapsto x^{-1}\).
The action of \(G\) on the roots induces an action of (a subgroup of) \(S_4\) on the set of \(C_2\) orbits \(\{x, x^{-1}\}\), 
which we lift to an action of (a subgroup of) \(S_4\) on the roots.

It is easily verified that the transposition \((12)\) induces one of the maps
	\begin{equation*}
		(\alpha, \oalpha, \beta, \obeta) \mapsto  (\oalpha, \alpha, \beta, \obeta), \quad \text{or} \quad
		(\alpha, \oalpha, \beta, \obeta)  \mapsto (\oalpha^{-1}, \alpha^{-1}, \beta^{-1}, \obeta^{-1}).
	\end{equation*}
Both of these maps commute with inversion and we note the latter is obtained from the former by applying the inversion map.
Thus, without loss of generality, we take \((12)\) as the former map.

In fact, every transposition can be lifted and commutes with inversion.
Hence \(G= S_4 \times C_2\).
The transformations are as follows:
	\begin{align*}
		(1 2) \colon& (\alpha, \oalpha, \beta, \obeta) \mapsto (\oalpha, \alpha, \beta, \obeta) \\
		(1 3) \colon& (\alpha, \oalpha, \beta, \obeta)  \mapsto (\beta^{-1}, \oalpha, \alpha^{-1}, \obeta) \\
		(1 4) \colon& (\alpha, \oalpha, \beta, \obeta)  \mapsto (\obeta^{-1}, \oalpha, \beta, \alpha^{-1}) \\
		(2 3) \colon& (\alpha, \oalpha, \beta, \obeta)  \mapsto (\alpha, \beta^{-1}, \oalpha^{-1}, \obeta) \\
		(2 4) \colon& (\alpha, \oalpha, \beta, \obeta)  \mapsto (\alpha, \obeta^{-1}, \beta, \oalpha^{-1}) \\
		(3 4) \colon& (\alpha, \oalpha, \beta, \obeta)  \mapsto (\alpha, \oalpha, \obeta, \beta).
	\end{align*}
By way of explanation, these transformations are deduced as follows.
First,  when choosing a lift of a transposition from the set of \(C_2\)-orbits to the set of roots, one can invert an even number of the orbit pairs \(\{x, x^{-1}\}\).
Second, the choice of inverting none or all four orbit pairs differs only by the inversion map.
Similarly, the two options when inverting two of the orbit pairs differ only by inversion.
A transposition \(\sigma\) necessarily preserves \(\alpha \oalpha - \beta \obeta = 0\).
Some careful accounting shows that permutations such as  \(\sigma(\oalpha)=\beta\) (or \(\sigma(\oalpha)=\obeta\)) lead to \(\alpha \beta - \overline{\alpha\beta}=0\) (or \(\alpha\obeta - \oalpha\beta=0\)) and so \(\alpha\beta \in\R\) (or \(\alpha\obeta\in\R\)).
Such conclusions contradict our assumptions on the conjugate ratios of \(f\).
Similar arguments lead to the conclusion that there is a unique (that is to say, unique up to inversion) automorphism of the roots that preserves the aforementioned relations.
Hence the Galois group is a subgroup of \(S_4 \times C_2\).

We make the following useful observations.
Firstly, the Galois group is a transitive subgroup of \(S_4\times C_2\) because \(f\) is irreducible.
Secondly, since the action of \(C_2\) on the roots is \emph{free} (given \(g,h\in C_2\) and a root \(x\) with \(gx = hx\) then necessarily \(g=h\)), the transitive subgroups of \(S_4\times C_2\) are precisely the direct product of \(C_2\) and a transitive subgroup of \(S_4\).
Finally, we note that \((12)(34)\), representing complex conjugation, is certainly an element of the Galois group.

The transitive subgroups of \(S_4\) are isomorphic to \(S_4, A_4, D_4, C_4\), and \(K_4\).
Because \(C_4\) and \(K_4\) are Abelian groups, \((12)(34)\) either lies in the centre of each group or the subgroup does not contain \((12)(34)\).
In the former case we deduce that the splitting field is a Kroneckerian field (by \autoref{cor:schinzel}) and so the splitting field is either CM or totally real.
The field cannot be totally real by our assumption that \(f\) has non-real roots.
The field is also not a CM-field for otherwise the conjugate ratios are necessarily roots of unity by \autoref{thm:daileda}.

We now focus on eliminating the possibility that the Galois group of \(f\) is \(D_4\times C_2\).
There are three conjugate subgroups of \(S_4\) that are isomorphic to \(D_4\):
\begin{align*}
	D_{4,0} &= \{e, (12), (34), (12)(34), (13)(24), (14)(23), (1324), (1423)\} = \langle (1324),(12) \rangle, \\
	D_{4,1} &= \{e, (13), (24), (13)(24), (12)(34), (14)(23), (1234), (1432)\} =\langle (1234),(13)\rangle,\ \text{and} \\
	D_{4,2} &= \{e, (14), (23), (14)(23), (12)(34), (13)(24), (1243), (1342)\} = \langle (1243),(14) \rangle.
\end{align*}
Note \((12)(34)\) lies in the centre of \(D_{4,0}\) and so we can once again employ \autoref{thm:daileda} to deduce that \(D_{4,0} \times C_2\) cannot be the Galois group of the splitting field of \(f\).
Assume, for a contradiction, that \(D_{4,1}\times C_2\) is the Galois group of the splitting field of \(f\).
It is easily verified that \(\alpha / \beta + \overline{\alpha/ \beta}\) is invariant under the action of \(D_{4,1}\times C_2\).
Hence \(\alpha/ \beta + \overline{\alpha/ \beta} \in \Q\).
Since \(\alpha/\beta\notin \Q\), we deduce that 
	\begin{equation*}
		(x-\alpha/\beta)(x-\overline{\alpha/\beta}) = x^2 - (\alpha/ \beta + \overline{\alpha /\beta})x + 1 \in\Q[x].
	\end{equation*}
Thus \(\alpha/\beta\) satisfies a quadratic monic polynomial; moreover, since \(\alpha/\beta\) is an algebraic integer it follows that \(\alpha/ \beta + \overline{\alpha/ \beta} \in\Z\).
Since \(|\alpha/\beta|=1\), we find \(\alpha/ \beta + \overline{\alpha/ \beta} \in\{\pm 2, \pm 1, 0\}\).
Each of the roots of the five possible polynomials are roots of unity, which contradicts our assumption on the conjugate ratios.
Mutatis mutandis, one eliminates the possibility that the Galois group is \(D_{4,2}\times C_2\) by similar consideration of \(\alpha/\obeta + \oalpha/\beta\).
Thus the Galois group of the splitting field of \(f\) is either \(S_4\times C_2\) or \(A_4\times C_2\), as required.
 \end{proof}

\begin{remark}
It is not possible to strengthen the above result:
the Galois group of the palindrome \(x^8 + x^7 - x^6 + x^5 + 5x^4 + x^3 - x^2 + x + 1\) is \(S_4\times C_2\), whilst
the Galois group of the palindrome \(x^8 + x^7 - 3x^6 + x^5 + 9x^4 + x^3 - 3x^2 + x + 1\) is \(A_4\times C_2\).
Both polynomials satisfy the assumptions in \autoref{thm:keilthy}.
For the avoidance of doubt, the converse of the statement in \autoref{thm:keilthy} is not true: the Galois group of the palindrome \(x^8 + x^6 + 6x^5 + 9x^4 + 6x^3 + x^2 + 1\), which possesses a single complex-conjugate pair of dominant roots, is \(S_4\times C_2\).
\end{remark}

\section{Directions for Future Research} \label{sec:future}

Our main contribution to the state of the art is a new proof of \autoref{thm:main}: 
the Skolem Problem is decidable for the class of reversible integer linear recurrence sequences of order at most seven.
The benefit of our approach (by comparison to the case analysis in \cite{lipton2021skolem}) is the potential for applications to related decision problems.
In this section we suggest directions for future research; in particular, variations on the Positivity and Skolem Problems for linear recurrence sequences.

\subsection{The Simple Reversible Positivity Problem} \label{ssec:SRPP}
The \emph{Positivity Problem} asks to decide whether the terms in an integer linear recurrence sequence are all non-negative.
Ouaknine and Worrell \cite{ouaknine2014positivity} demonstrated that the Positivity Problem is decidable for \emph{simple} linear recurrence sequences
(those whose characteristic polynomials have no repeated roots) of order at most nine.
The proofs therein very much depend on an approach via Baker's Theorem for linear forms in logarithms.
Those authors identify the class of non-degenerate linear recurrence sequences of order ten that are not amenable to said approach:
the characteristic polynomials in this class have one dominant real root, four complex-conjugate pairs of dominant roots, and one non-dominant root.

Consider the family of monic polynomials in \(\Z[x]\) of degree ten, with constant coefficient \(\pm 1\), and the above distribution of roots.
Suppose that \(f\in\Z[x]\) is a polynomial in this class.
We immediately find that \(f\) is irreducible since each dominant root of \(f\) is necessarily conjugate to the single non-dominant root.
The roots of \(f\) lie on two concentric circles centred at the origin and so we can invoke \autoref{thm:bicycle} to derive a contradiction. %
Thus the obstruction to decidability falls away under our extra assumptions and so we extend the result in \cite{ouaknine2014positivity} in this restricted setting.
In summary,
the \emph{Simple Reversible Positivity Problem} is decidable  for recurrences of order at most ten and so we have proved \autoref{cor:srpp}.

\begin{remark}
Let us give an alternative proof for \autoref{cor:srpp}; this alternative proof does not invoke \autoref{thm:bicycle}.

Observe that any candidate polynomial in our discussion is irreducible.
We can invoke standard results for irreducible polynomials with many dominant roots.
For example, versions of the following lemma are found in \cite{smyth1982conjugate, ferguson1997irreducible}.
	\begin{lemma} \label{lem:manyroots}
	 Suppose that \(\alpha\) is an algebraic number with Galois conjugates \(\beta\) and \(\gamma\) satisfying \(\alpha^{2} = \beta \gamma\).
	 Then the conjugate ratio \(\alpha/\beta\) is a root of unity.
	\end{lemma}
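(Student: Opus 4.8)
The plan is to reduce the statement to an application of Kronecker's theorem (\autoref{thm:kronecker}) by proving that $\delta := \alpha/\beta$ is an algebraic integer all of whose Galois conjugates lie on the unit circle. First I would pass to the Galois closure $L$ of $\Q(\alpha)$ over $\Q$, with Galois group $G$, and fix automorphisms $\sigma,\tau\in G$ with $\sigma(\alpha)=\beta$ and $\tau(\alpha)=\gamma$, so that the hypothesis reads $\alpha^2=\sigma(\alpha)\,\tau(\alpha)$. Note that we do \emph{not} assume $\alpha$ is an algebraic integer; integrality of $\delta$ will be a conclusion rather than a hypothesis.

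The key step is to exploit that the whole group $G$ preserves this multiplicative relation. Fix an arbitrary place $w$ of $L$ and define $f\colon G\to\R$ by $f(\rho)=\log\lvert\rho(\alpha)\rvert_w$. Applying an arbitrary $\rho\in G$ to the identity $\alpha^2=\sigma(\alpha)\,\tau(\alpha)$ and taking $\log\lvert\wc\rvert_w$ yields the averaging identity $2f(\rho)=f(\rho\sigma)+f(\rho\tau)$ for every $\rho\in G$. I would then invoke a discrete maximum principle: $f$ is harmonic for the walk on $G$ that passes from $\rho$ to $\rho\sigma$ or $\rho\tau$, whence $f$ is constant on each communicating class, namely on each left coset of $H:=\langle\sigma,\tau\rangle$ (the coset is closed under right multiplication by $\sigma,\tau$, hence by all of $H$, and the induced walk is irreducible there). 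Since $e$ and $\sigma$ both lie in $H=eH$, this forces $f(e)=f(\sigma)$, that is $\lvert\alpha\rvert_w=\lvert\beta\rvert_w$.

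Because $w$ was arbitrary, $\lvert\delta\rvert_w=1$ at every place of $L$. In particular $\delta$ is integral at every finite place, so $\delta$ is a unit in the ring of algebraic integers of $L$; and every Galois conjugate of $\delta$ arises as $\iota(\delta)$ for some embedding $\iota\colon L\hookrightarrow\C$ corresponding to an archimedean place, hence has absolute value $1$. The minimal polynomial of $\delta$ is therefore a monic integer polynomial with nonzero constant term all of whose roots lie on the unit circle, so \autoref{thm:kronecker} shows that $\delta=\alpha/\beta$ is a root of unity, as required.

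The main obstacle is the averaging step: the content is that a single multiplicative relation among three conjugates, once pushed around by the entire Galois group, rigidifies the absolute values \emph{place by place}. The maximum-principle argument must be run at every place simultaneously---archimedean and non-archimedean alike---because controlling only the archimedean absolute values would be insufficient: an algebraic number all of whose complex conjugates lie on the unit circle need not be a root of unity, whereas one whose absolute values equal $1$ at \emph{all} places necessarily is.
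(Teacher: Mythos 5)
Your proof is correct; note, however, that there is no in-paper proof to compare it against: the paper states \autoref{lem:manyroots} as a known result, citing Smyth and Ferguson, and never proves it, so what you have produced is a self-contained argument for a lemma the paper uses as a black box. Each step checks out. The averaging identity \(2f(\rho)=f(\rho\sigma)+f(\rho\tau)\) follows from applying \(\rho\) to \(\alpha^2=\sigma(\alpha)\tau(\alpha)\) and the multiplicativity of \(|\cdot|_w\). Your maximum-principle step is sound because, \(G\) being finite, the semigroup generated by \(\sigma\) and \(\tau\) coincides with \(H=\langle\sigma,\tau\rangle\) (inverses are positive powers), so the walk \(\rho\mapsto\rho\sigma,\rho\tau\) is indeed irreducible on each left coset of \(H\), and a harmonic function on a finite irreducible chain is constant by the usual argument at a maximiser; taking the coset \(eH\), which contains both \(e\) and \(\sigma\), gives \(|\alpha|_w=|\beta|_w\). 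Quantifying over all places then yields both integrality of \(\delta=\alpha/\beta\) (from the finite places) and the unit-circle condition on all of its conjugates (from the archimedean ones), after which \autoref{thm:kronecker} --- already stated in the paper --- closes the argument. Your closing remark is exactly the right caution: since \(\alpha\) is not assumed to be an algebraic integer, an archimedean-only argument cannot succeed (for instance \((3+4\iu)/5\) has all conjugates on the unit circle yet is not a root of unity, not being an algebraic integer), and it is the non-archimedean places that supply the missing integrality. In effect you show that \(\delta\) has Weil height zero and then invoke Kronecker; whether or not this matches the proofs in the sources the paper cites, it is complete, and it makes the remark in Subsection~\ref{ssec:SRPP} self-contained, resting on nothing beyond \autoref{thm:kronecker}.
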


Now suppose that a polynomial \(f\in\Z[x]\) of degree ten has the aforementioned distribution of roots and is the characteristic polynomial of a non-degenerate linear recurrence sequence.
Since \(f\) is necessarily irreducible and has a dominant positive root, we can invoke \autoref{lem:manyroots}.
We deduce that \(f\) is the characteristic polynomial of a degenerate recurrence sequence, a contradiction.
Thus the obstacle to deciding positivity of simple linear recurrence sequences at order ten falls away under our additional assumption of reversibility: there are no sequences in the aforementioned class.
As an aside, \autoref{lem:manyroots} is used by Dubickas and Smyth \cite{dubickas2001remak} as a stepping-stone towards \autoref{thm:bicycle}.

We propose that such approaches as outlined above could lead to further decidability results for variants of the Positivity Problem.
The key observations on the characteristic polynomials were: irreducibility and a dominant positive root.
Here we can make the latter assumption without loss of generality.
Indeed, let us recall the following classical consequence of Pringsheim's Theorem in complex analysis that is pertinent to deciding positivity.
	\begin{lemma} \label{lem:positivity}
	Suppose that a non-zero real-valued linear recurrence sequence \(\seq{X_n}{n}\) has no positive dominant characteristic root. 
	Then the cardinalities of the sets \(\{n\in\N : X_n>0\}\) and \(\{n\in\N : X_n < 0\}\) are both infinite.
	\end{lemma}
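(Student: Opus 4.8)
The plan is to argue via the ordinary generating function and Pringsheim's Theorem, which is the standard route for results of this shape. First I would form the generating function \(F(z) = \sum_{n=0}^{\infty} X_n z^n\). Because \(\seq{X_n}{n}\) satisfies a linear recurrence with characteristic polynomial \(f\), this \(F\) is a rational function \(P(z)/Q(z)\) whose denominator, in lowest terms, has roots precisely at the reciprocals of those characteristic roots that appear with non-zero coefficient in the exponential-polynomial form \(X_n = \sum_\ell p_\ell(n)\lambda_\ell^n\). Writing \(\rho\) for the maximal modulus among these effectively-appearing roots, the radius of convergence of \(F\) is exactly \(1/\rho\). Since \(a_0\neq 0\) the recurrence is invertible, so a non-zero sequence cannot be eventually zero; hence \(\rho\in(0,\infty)\) and \(1/\rho\) is a genuine finite positive point on the circle of convergence.

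Next, I would argue by contradiction. Suppose one of the two sets is finite; replacing \(\seq{X_n}{n}\) by \(\seq{-X_n}{n}\) if necessary (which leaves \(f\), and hence \(\rho\) and the location of all poles, unchanged), we may assume \(X_n \ge 0\) for all \(n \ge N\). The tail \(G(z) := \sum_{n \ge N} X_n z^n\) differs from \(F\) only by a polynomial, so \(G\) and \(F\) share the same poles and the same radius of convergence \(1/\rho\), and \(G\) has non-negative coefficients. Pringsheim's Theorem now applies to \(G\): a power series with non-negative coefficients and radius of convergence \(1/\rho\) must have a singularity at the positive real point \(z = 1/\rho\). As \(F\) is rational this singularity is a pole, so \(1/\rho\) is a pole of \(F\) and therefore \(\rho\) is an effectively-appearing characteristic root of \(\seq{X_n}{n}\).

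Finally, since \(\rho > 0\) is real and equals the dominant modulus, \(\rho\) is a positive dominant characteristic root, contradicting the hypothesis. This shows \(\{n\in\N : X_n < 0\}\) is infinite; applying the same reasoning to \(\seq{-X_n}{n}\) shows \(\{n\in\N : X_n > 0\}\) is infinite, completing the argument. The step I expect to require the most care is the passage from \emph{singularity on the circle of convergence} to \emph{positive dominant characteristic root}: one must confirm that the radius of convergence really is \(1/\rho\) with \(\rho\) the effective dominant modulus, that subtracting the finitely many initial terms preserves the pole structure, and above all that Pringsheim pins the singularity to the positive real axis, so that the resulting root is genuinely positive rather than merely of modulus \(\rho\).
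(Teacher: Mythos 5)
Your proof is correct and takes exactly the route the paper intends: the paper states this lemma without proof, describing it as a classical consequence of Pringsheim's Theorem, and your generating-function argument (rationality of \(F\), radius of convergence \(1/\rho\), Pringsheim forcing a singularity --- necessarily a pole --- at the positive real point \(z=1/\rho\), hence a positive dominant characteristic root) is precisely that classical argument, with the delicate points (non-zero plus invertible recurrence gives \(\rho>0\); subtracting finitely many terms preserves non-negativity and the pole structure) handled properly.
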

\end{remark}

\subsection{The Skolem Problem and unit-norm roots} \label{ssec:SPunitnorm}

	In this paper we invoke results such as \autoref{thm:bicycle} in order to reduce the reversible Skolem Problem at orders five, six, and seven, to decidable instances of the Skolem Problem (\autoref{prop:maxevalues}).
	It is interesting to speculate that techniques involving identities between roots (the Galois theory underlying \autoref{thm:bicycle}) have further applications in establishing decidability results for linear recurrence sequences.
	Indeed, the general version of \autoref{thm:bicycle} (see \cite[Theorem 2.1]{dubickas2001remak}) considers not only algebraic integers that are units, but also algebraic numbers that are \emph{unit-norms}.
	An algebraic number \(\alpha\) is a \emph{unit-norm} if the minimal polynomial of \(\alpha\) is of the form \(a_d x^d - a_{d-1} x^{d-1} - \cdots - a_1 x - a_0 \in\Z[x]\) such that \(|a_d|=|a_0|\).
	So the unit-norm algebraic integers are the units.

	We can strengthen the statement in \autoref{prop:unit5} by invoking \cite[Theorem 2.1]{dubickas2001remak}: we deduce there is no polynomial \(a_5 x^5 - a_4x^4 - a_3x^3 -a_2x^2 - a_1 x \pm a_5\in\Z[x]\) that satisfies hypotheses \ref{h1} and \ref{h2}.
	Thus not only do we settle decidability of the reversible Skolem Problem at order five, but also decidability of the Skolem Problem at order five for the class of rational-valued linear recurrence sequences that satisfy a relation of the form
		\begin{equation*}
		 X_{n+5} = a_{4} X_{n+4} + a_{3} X_{n+3} + a_{2} X_{n+2} + a_1 X_{n+1} \pm X_n
		\end{equation*}
 with \(a_1, a_2, a_3, a_4 \in\Q\).
 Thus we have established \autoref{cor:spunitnorm}.
%

\newpage


\bibliography{Skolembib}



\end{document}